\let\bbordermatrix\bordermatrix
\patchcmd{\bbordermatrix}{8.75}{4.75}{}{}
\patchcmd{\bbordermatrix}{\left(}{\left[}{}{}
\patchcmd{\bbordermatrix}{\right)}{\right]}{}{}
\patchcmd{\bbordermatrix}{\begingroup}{\begingroup\openup1\jot}{}{}
\newtheorem{theorem}{Theorem}[section]
\theoremstyle{plain}
\newtheorem{corollary}{Corollary}[section]
\newtheorem{lemma}{Lemma}[section]
\newtheorem{problem}{Problem}
\newtheorem{proposition}{Proposition}
\newtheorem{fact}{Fact}
\numberwithin{equation}{section}
\begin{document}
\title[ Some results concerning the valences of (super) edge-magic graphs ]{ Some new results concerning the valences of (super) edge-magic graphs }
\author{Y. Takahashi}
\address{Department of Science and Engineering, Faculty of Electronics and
Informatics, Kokushikan University, 4-28-1 Setagaya, Setagaya-ku, Tokyo
154-8515, Japan}
\email{takayu@kokushikan.ac.jp}
\author{F.A. Muntaner-Batle}
\address{Graph Theory and Applications Research Group, School of Electrical
Engineering and Computer Science, Faculty of Engineering and Built
Environment, The University of Newcastle, NSW 2308 Australia }
\email{famb1es@yahoo.es}
\author{R. Ichishima}
\address{Department of Sport and Physical Education, Faculty of Physical
Education, Kokushikan University, 7-3-1 Nagayama, Tama-shi, Tokyo 206-8515,
Japan}
\email{ichishim@kokushikan.ac.jp}
\date{June 28, 2023}
\subjclass{Primary 05C78}
\keywords{perfect (super) edge-magic labeling, perfect (super) edge-magic deficiency, valence, graph labeling}

\begin{abstract}
A graph $G$ is called edge-magic if there exists a bijective function $f:V\left(G\right) \cup E\left(G\right)\rightarrow \left\{1, 2, \ldots , \left\vert V\left( G\right) \right\vert +\left\vert E\left( G\right) \right\vert \right\}$ such that $f\left(u\right) + f\left(v\right) + f\left(uv\right)$ is a constant (called the valence of $f$) for each $uv\in E\left( G\right) $. 
If $f\left(V \left(G\right)\right) =\left\{1, 2, \ldots , \left\vert V\left( G\right) \right\vert \right\}$, then $G$ is called a super edge-magic graph. 
A stronger version of edge-magic and super edge-magic graphs appeared when the concepts of perfect edge-magic and perfect super edge-magic graphs were introduced. 
The super edge-magic deficiency $ \mu_{s}\left(G\right)$ of a graph $G$ is defined to be either the smallest nonnegative integer $n$ with the property that $G \cup nK_{1}$ is super edge-magic or $+ \infty$ if there exists no such integer $n$. 
On the other hand, the edge-magic deficiency $ \mu\left(G\right)$ of a graph $G$ is the smallest nonnegative integer $n$ for which $G\cup nK_{1}$ is edge-magic, being $ \mu\left(G\right)$ always finite. 
In this paper, the concepts of (super) edge-magic deficiency are generalized using the concepts of perfect (super) edge-magic graphs. 
This naturally leads to the study of the valences of edge-magic and super edge-magic labelings. 
We present some general results in this direction and study the perfect (super) edge-magic deficiency of the star $K_{1,n}$.
\end{abstract}

\maketitle

\section{Introduction}
Unless stated otherwise, the graph-theoretical notation and terminology used here will follow Chartrand and Lesniak \cite{CL}. 
In particular, the \emph{vertex set} of a graph $G$ is denoted by $V \left(G\right)$, while the \emph{edge set} of $G$ is denoted by $E\left (G\right)$.

For the sake of brevity, we will use the notation $\left[ a, b\right] $ for the interval of integers $x $ such that $a\leq x\leq b$.
Kotzig and Rosa \cite{KR} initiated the study of what they called magic valuations. 
This concept was later named edge-magic labelings by Ringel and Llad\'{o} \cite{RL}, and this has become the popular term.
A graph $G$ is called \emph{edge-magic} if there exists a bijective function $f:V\left(G\right) \cup E\left(G\right)\rightarrow \left[1, \left\vert V\left( G\right) \right\vert +\left\vert E\left( G\right) \right\vert \right]$ such that $f\left(u\right) + f\left(v\right) + f\left(uv\right)$ is a constant (called the \emph{valence} $\text{val}\left(f\right)$ of $f$) for each $uv\in E\left( G\right) $. Such a function is called an \emph{edge-magic labeling}. More recently, they have also been referred to as edge-magic total labelings by Wallis \cite{Wallis}.

Enomoto et al. \cite{ELNR} introduced a particular type of edge-magic labelings, namely, super edge-magic labelings. 
They defined an edge-magic labeling of a graph $G$ with the additional property that 
$f\left(V \left(G\right)\right)=\left[1, \left\vert V\left( G\right) \right\vert \right]$ to be a \emph{super edge-magic labeling}. 
Thus, a \emph{super edge-magic graph} is a graph that admits a super edge-magic labeling. 

Lately, super edge-magic labelings and super edge-magic graphs were called by Wallis \cite{Wallis} strong edge-magic total labelings and strongly edge-magic graphs, respectively. According to the latest version of the survey on graph labelings by Gallian \cite{Gallian} available to the authors, Hegde and Shetty \cite{HS2} showed that the concepts of super edge-magic graphs and strongly indexable graphs (see \cite{AH} for the definition of a strongly indexable graph) are equivalent.

The following result found in \cite{FIM} provides necessary and sufficient conditions for a graph to be super edge-magic, which will prove later to be useful.

\begin{lemma}
\label{trivial}
A graph $G$ is super edge-magic if and only if
there exists a bijective function $f:V\left( G\right) \rightarrow \left[ 1, \right. \left\vert V\left( G\right) \right\vert] $ such that the set 
\begin{equation*}
S=\left\{ f\left( u\right) +f\left( v\right): uv\in E\left( G\right) \right\}
\end{equation*}%
consists of $\left\vert E\left( G\right) \right\vert $ consecutive integers. In such a case, $f$ extends to a super
edge-magic labeling of $G$ with valence $k=\left\vert V\left( G\right) \right\vert +\left\vert E\left( G\right) \right\vert +s$, where $s=\min
\left( S\right) $ and 
\begin{equation*}
S=\left[ k-\left( \left\vert V\left( G\right) \right\vert +\left\vert E\left( G\right) \right\vert \right) ,k-\left( \left\vert V\left( G\right) \right\vert +1\right) \right] \text{.}
\end{equation*}
\end{lemma}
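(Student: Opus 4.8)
The plan is to prove both directions by passing between a super edge-magic labeling and its restriction to the vertex set, keeping careful track of which integers are consumed by the edge labels. Throughout, write $p=\left\vert V\left( G\right) \right\vert $ and $q=\left\vert E\left( G\right) \right\vert $.

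For the forward implication, suppose $g:V\left( G\right) \cup E\left( G\right) \rightarrow \left[ 1,p+q\right] $ is a super edge-magic labeling with valence $k$, so that $g\left( V\left( G\right) \right) =\left[ 1,p\right] $ and therefore $g\left( E\left( G\right) \right) =\left[ p+1,p+q\right] $. Set $f\left( x\right) =g\left( x\right) $ for each $x\in V\left( G\right) $; this is a bijection from $V\left( G\right) $ onto $\left[ 1,p\right] $. For every edge $uv$ we have $f\left( u\right) +f\left( v\right) =k-g\left( uv\right) $, and as $g\left( uv\right) $ runs bijectively over $\left[ p+1,p+q\right] $ the quantity $k-g\left( uv\right) $ runs over the $q$ consecutive integers forming $\left[ k-\left( p+q\right) ,k-\left( p+1\right) \right] $. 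Hence $S$ is exactly this interval, $s=\min \left( S\right) =k-\left( p+q\right) $, and $k=p+q+s$, as claimed.

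For the converse, suppose $f:V\left( G\right) \rightarrow \left[ 1,p\right] $ is a bijection for which $S$ consists of $q$ consecutive integers; write $s=\min \left( S\right) $, so $S=\left[ s,s+q-1\right] $. The key observation is that, since $\left\vert S\right\vert =q=\left\vert E\left( G\right) \right\vert $, the map $uv\mapsto f\left( u\right) +f\left( v\right) $ is a bijection from $E\left( G\right) $ onto $S$ (a surjection between finite sets of equal size is a bijection). Put $k=p+q+s$ and define $g$ by $g\left( x\right) =f\left( x\right) $ for $x\in V\left( G\right) $ and $g\left( uv\right) =k-f\left( u\right) -f\left( v\right) $ for $uv\in E\left( G\right) $. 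Since $f\left( u\right) +f\left( v\right) \in \left[ s,s+q-1\right] $, we get $g\left( uv\right) \in \left[ k-\left( s+q-1\right) ,k-s\right] =\left[ p+1,p+q\right] $, and the bijectivity of the sum map makes $g$ restricted to the edges a bijection onto $\left[ p+1,p+q\right] $. Thus $g$ is a bijection from $V\left( G\right) \cup E\left( G\right) $ onto $\left[ 1,p+q\right] $ with $g\left( u\right) +g\left( v\right) +g\left( uv\right) =k$ on every edge and $g\left( V\left( G\right) \right) =\left[ 1,p\right] $; that is, $g$ is a super edge-magic labeling of $G$ with valence $k=p+q+s$. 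Finally, substituting $s=k-p-q$ into $\left[ s,s+q-1\right] $ gives the stated description $S=\left[ k-\left( p+q\right) ,k-\left( p+1\right) \right] $.

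The only subtle point — the one I would flag as the main obstacle — is the cardinality argument in the converse: one must notice that the hypothesis ``$S$ consists of $\left\vert E\left( G\right) \right\vert $ consecutive integers'' forces $\left\vert S\right\vert =q$, hence that distinct edges receive distinct vertex-sums, which is exactly what is needed for the induced edge labeling $g\left( uv\right) =k-f\left( u\right) -f\left( v\right) $ to be injective and to fill the block $\left[ p+1,p+q\right] $. Everything else is routine bookkeeping with intervals of integers.
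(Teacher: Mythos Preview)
Your proof is correct. Note, however, that the paper does not actually supply its own proof of this lemma: it is quoted as a known result from \cite{FIM} (Figueroa-Centeno, Ichishima, Muntaner-Batle, \textit{Discrete Math.}\ \textbf{231} (2001) 153--168), so there is no in-paper argument to compare against. What you have written is precisely the standard proof of this fact: restrict a super edge-magic labeling to the vertices to get the forward direction, and for the converse observe that the hypothesis $\left\vert S\right\vert =q$ forces the edge-sum map to be injective, so that $g(uv)=k-f(u)-f(v)$ bijectively fills the block $\left[ p+1,p+q\right] $. Your flagging of the cardinality observation as the one non-mechanical step is exactly right; everything else is indeed interval bookkeeping.
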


The \emph{low characteristic} and \emph{high characteristic} of a super edge-magic graph $G$ are defined by $\gamma\left(G\right) =\min\left(S\right)$ and $\Gamma\left(G\right) =\max\left(S\right)$, respectively, where $S$ is the set as in Lemma \ref{trivial}. These concepts will prove to be useful in this paper later.

For every graph $G$, Kotzig and Rosa \cite{KR} proved that there exists an edge-magic graph $H$ such that $H= G \cup nK_{1}$ for some nonnegative integer $n$. 
This motivated them to define the edge-magic deficiency of a graph. 
The \emph{edge-magic deficiency} $\mu \left(G\right)$ of a graph $G$ is the smallest nonnegative integer $n$ for which $G \cup nK_{1}$ is edge-magic. Inspired by Kotzig and Rosa's notion, the concept of \emph{super edge-magic deficiency} $\mu_{s} \left(G\right)$ of a graph $G$ was analogously defined in \cite{FIM2} as either the smallest nonnegative integer $n$ with the property that $G \cup nK_{1}$ is super edge-magic or $+ \infty$ if there exists no such integer $n$.  Thus, the super edge-magic deficiency of a graph $G$ is a measure of how “close” (“ far ”) $G$ is to (from) being super edge-magic. 

An alternative term exists for the super edge-magic deficiency, namely, the vertex dependent characteristic. 
This term was coined by Hedge and Shetty \cite{HS}. 
In \cite{HS}, they gave a construction of polygons having the same angles and distinct sides using the result on the super edge-magic deficiency of cycles provided in \cite{FIM3}.

Noting that for a super edge-magic labeling $f$ of a graph $G$ with order $p$ and size $q$, the valence $k$ is given by the formula:

\begin{equation*}
  k = \frac{\sum_{u \in V\left(G\right)} \text{deg}(u)f(u) + \sum_{i=p+1}^{p+q} i }{q} \text{.} \\
\end{equation*}
L\'{o}pez et al. \cite{LMR1} defined the set

\begin{multline*}
  S_{G} = \Biggl\lbrace
  \frac{\sum_{u \in V\left(G\right)} \text{deg}(u)g(u) + \sum_{i=p+1}^{p+q} i }{q} \text{:}  \\ 
   \text{ the function } g:V\left(G\right) \rightarrow \left[1,p\right] \text{ is bijective}
  \Biggr\rbrace \text{.}
\end{multline*}
If $\lceil \min S_{G} \rceil \leq \lfloor \max S_{G} \rfloor$, then the \emph{super edge-magic interval} of $G$ is the set
\begin{equation*}
  I_G = [\lceil \min S_{G} \rceil, \lfloor \max S_{G} \rfloor] \text{.}
\end{equation*}
The \emph{super edge-magic set} of $G$ is
\begin{equation*}
  \sigma_{G} = \left\lbrace k \in I_G \right. \text{: there exists a super edge-magic labeling of } G \text{ with valence } k \left. \right\rbrace.
\end{equation*}
L\'{o}pez et al. called a graph $G$ \emph{perfect super edge-magic} if $I_G = \sigma_G$. They showed that the family of paths $P_n$ is a family of perfect super edge-magic graphs with $|I_{P_{n}}|=1$ if $n$ is even and $|I_{P_{n}}| = 2$  if $n$ is odd, and raise the question of whether there is an infinite family of graphs $(F_1, F_2, \ldots)$  such that each member of the family is perfect super edge-magic and $\text{lim}_{i \rightarrow +\infty} |I_{F_{i}}| = +\infty$. They showed that graphs $G \cong C_{p^{k}} \odot \overline{K_{n}}$, where $p > 2$ is a prime number and $ \odot$ denotes the corona product, is such a family. For more detailed information on this matter, see \cite{LMR1}.

For an edge-magic labeling $f$ of a graph $G$, the valence $k$ is given by the formula:
\begin{equation*}
k = \frac{\sum_{u \in V\left(G\right)} \text{deg}(u)f(u) + \sum_{e\in E\left(G\right)} f\left(e\right) }{\left\vert E\left( G\right) \right\vert} \text{.}
\end{equation*}

In \cite{LMR4} L\'{o}pez et al. introduced, in a similar way of what we have seen so far for the case of super edge-magic labelings, the concepts of edge-magic interval, edge-magic set and perfect edge-magic graph.

For a graph $G$, define the set 
\begin{multline*}
  T_{G} = \Biggl\lbrace
  \frac{\sum_{u \in V\left(G\right)} \text{deg}(u)g(u) + \sum_{e \in E\left(G\right)}g\left(e\right) }{ \left\vert E\left( G\right) \right\vert} \text{:}\\
  \text{ the function } g:V\left(G\right)\cup E\left(G\right) \rightarrow \left[1, \left\vert V\left( G\right) \right\vert+ \left\vert E\left( G\right) \right\vert\right] \text{ is bijective}
  \Biggr\rbrace \text{.}
\end{multline*}
If $\lceil \min T_{G} \rceil \leq \lfloor \max T_{G} \rfloor$, then the \emph{edge-magic interval} of $G$ is the set
\begin{equation*}
   \lambda_{G} =\left[\lceil \min T_{G} \rceil, \lfloor \max T_{G} \rfloor \right] \text{.}
\end{equation*}
The \emph{edge-magic set} of $G$ is
\begin{equation*}
  \tau_{G}= \left\lbrace k \in \lambda_G \right. \text{: there exists an edge-magic labeling of } G \text{ with valence } k \left. \right\rbrace.
\end{equation*}
L\'{o}pez et al. called a graph $G$ \emph{perfect edge-magic} if $\lambda_{G} = \tau_{G}$.

Motivated by the concepts of edge-magic and super edge-magic deficiencies together with the concepts of perfect edge-magic and perfect super edge-magic graphs, we introduce the concepts of perfect edge-magic deficiency and perfect super edge-magic deficiency next. 

The \emph{perfect edge-magic deficiency} $\mu_{p}\left(G\right)$ of a graph $G$ is defined to be the smallest nonnegative integer $n$ with the property that $G\cup nK_{1}$ is perfect edge-magic or $ +\infty$ if there exists no such integer $n$. On the other hand, the \emph{perfect super edge-magic deficiency} $\mu_{p}^{s}\left(G\right)$ of a graph $G$ is defined to be the smallest nonnegative integer $n$ with the property that $G\cup nK_{1}$ is perfect super edge-magic or $ +\infty$ if there exists no such integer $n$.

In \cite{LMR4} L\'{o}pez et al. defined the \emph{irregular crown} $C(n; j_1, j_2, \ldots, j_n) = (V, E)$, where $n > 2$ and $j_i \geq 0$ for all $i \in \lbrack 1, n \rbrack$ as follows: $V = \lbrace v_{i}:i\in \lbrack 1,n \rbrack \rbrace \cup V_1 \cup V_2 \cup \cdots \cup V_n$, where $V_k = \lbrace v_{k}^{i}:i\in \lbrack 1,{j_{k} \rbrack} \rbrace$, if $j_k \neq 0$ and $V_k = \emptyset$ if $j_k = 0$ for each $k \in \lbrack 1, n \rbrack$ and $E = \lbrace v_{i}v_{i+1}: i \in \lbrack 1, n-1 \rbrack \rbrace \cup \lbrace v_{1}v_{n} \rbrace \cup (\cup^{n}_{k=1, j_{k} \neq 0} \lbrace v_{k}v_{k}^{l}: k \in \lbrack 1, j_{k} \rbrack \rbrace)$.
In particular, they denoted the graph $C_{m}^{n} \cong C(m; j_1, j_2, \ldots, j_m)$, where $j_{2i-1} = n$ for each $i \in \lbrack 1, (m+1)/2 \rbrack$, and $j_{2i} = 0$ for each $i \in \lbrack 1, (m-1)/2 \rbrack$ using the notation $C_{m}^{n}$. They proved that the graphs $C_{3}^{n}$ and $C_{5}^{n}$ are perfect edge-magic for all integers $n > 1$. 
In the same paper, they also proved that if $m=p^{k}$ when $p$ is a prime number and $k \in \mathbb{N}$, 
then the graph $C_{m} \odot \overline{K_{n}}$ is perfect edge-magic for all positive integers $n$.

L\'{o}pez et al. \cite{LMR3} defined the concepts of $\mathfrak{F}^{k}$-family and $\mathfrak{E}^{k}$-family of graphs as follows. The infinite family of graphs $(F_1, F_2, \ldots)$ is an $\mathfrak{F}^{k}$-\emph{family} if each element $F_n$ admits exactly $k$ different valences for super edge-magic labelings, and $\text{lim}_{n \rightarrow +\infty} |I(F_{n})| = +\infty$. The infinite family of graphs $(F_1, F_2, \ldots)$ is an $\mathfrak{E}^{k}$-\emph{family} if each element $F_n$ admits exactly $k$ different valences for edge-magic labelings, and $\text{lim}_{n \rightarrow +\infty} |J(F_{n})| = +\infty$.

An easy observation from the results found in \cite{On_paper} and independently in \cite{Wallis}, is that $(K_{1,2}, K_{1,3}, \ldots)$ is an $\mathfrak{F}^{2}$-family and $\mathfrak{E}^{3}$-family. They posed the following two problems: for which positive integers $k$ is it possible to find $\mathfrak{F}^{k}$-families and $\mathfrak{E}^{k}$-families? Their main results in \cite{LMR3} are that an $\mathfrak{F}^{k}$-family exits for each $k =$ 1, 2 and 3, and an $\mathfrak{E}^{k}$-family exits for each $k =$ 3, 4 and 7.

The following inequality was found by Enomoto et al. \cite{ELNR}.
\begin{theorem}
If $G$ is a super edge-magic graph, then 
\begin{equation*}
\left\vert E\left( G\right) \right\vert \leq 2\left\vert V\left( G\right) \right\vert -3 \text{.}
\end{equation*}
\end{theorem}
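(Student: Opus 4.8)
The plan is to reduce everything to Lemma~\ref{trivial}. Write $p=\left\vert V\left(G\right)\right\vert$ and $q=\left\vert E\left(G\right)\right\vert$, and assume $q\geq 1$ (otherwise there is nothing of interest to prove). Since $G$ is super edge-magic, Lemma~\ref{trivial} supplies a bijective function $f:V\left(G\right)\rightarrow\left[1,p\right]$ for which the set
\begin{equation*}
S=\left\{f\left(u\right)+f\left(v\right):uv\in E\left(G\right)\right\}
\end{equation*}
consists of $q$ consecutive integers. The entire argument then rests on locating $S$ inside a small fixed interval.

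First I would bound the elements of $S$. Because $f$ is injective, the two endpoints of any edge $uv$ receive distinct values from $\left[1,p\right]$; hence the smallest that $f\left(u\right)+f\left(v\right)$ can be is $1+2=3$, and the largest it can be is $\left(p-1\right)+p=2p-1$. Consequently $S\subseteq\left[3,2p-1\right]$.

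Finally I would count. The interval $\left[3,2p-1\right]$ contains exactly $2p-3$ integers, while $S$ is a set of $q$ integers (they are consecutive, hence in particular distinct) contained in that interval. Therefore $q\leq 2p-3$, which is precisely $\left\vert E\left(G\right)\right\vert\leq 2\left\vert V\left(G\right)\right\vert-3$.

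There is no real obstacle here: the only point worth flagging is that Lemma~\ref{trivial} is doing all the work, since it is what guarantees the consecutive-integer (equivalently, the distinctness and cardinality) structure of $S$; once that is in hand the proof is a single pigeonhole step. One could equivalently present the bound as $\left\vert S\right\vert=q$ together with $S\subseteq\left\{3,4,\ldots,2p-1\right\}$, yielding $q\leq 2p-3$ immediately.
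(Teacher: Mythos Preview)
Your argument is correct. The paper does not actually prove this theorem; it quotes it as a result of Enomoto, Llad\'{o}, Nakamigawa, and Ringel~\cite{ELNR} and moves on. So there is no ``paper's own proof'' to compare against here.

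That said, your route via Lemma~\ref{trivial} is exactly the standard way this bound is derived once that lemma is available, and every step you wrote is sound: the edge sums lie in $\left[3,2p-1\right]$ because $f$ is injective on $V\left(G\right)$, that interval has $2p-3$ integers, and Lemma~\ref{trivial} forces $\left\vert S\right\vert=q$, whence $q\leq 2p-3$. Historically Lemma~\ref{trivial} (from~\cite{FIM}) postdates the Enomoto et~al.\ paper, so their original argument was phrased slightly differently, but the underlying counting is the same. Your caveat about $q\geq 1$ is appropriate; the inequality is vacuous or trivially degenerate for edgeless graphs.
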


The following result found in \cite{IMR} provides a sufficient condition for a super edge-magic graph to contain a triangle.

\begin{theorem}
If $G$ is a super edge-magic graph with 
\begin{equation*}
\left\vert E\left( G\right) \right\vert = 2\left\vert V\left( G\right) \right\vert -3 \text{ or } 2\left\vert V\left( G\right) \right\vert -4 \text{,}
\end{equation*} then $G$ contains a triangle.
\end{theorem}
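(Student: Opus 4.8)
The plan is to use Lemma~\ref{trivial}. Put $p=|V(G)|$ and $q=|E(G)|$, and fix a bijection $f:V(G)\rightarrow[1,p]$ as in that lemma, so that $S=\{f(u)+f(v):uv\in E(G)\}$ is a set of $q$ consecutive integers. Every sum $f(u)+f(v)$ lies in $[3,2p-1]$, the smallest possible value being $1+2$ and the largest $(p-1)+p$; since $[3,2p-1]$ contains exactly $2p-3$ integers, the hypothesis forces $S=[3,2p-1]$ when $q=2p-3$, and $S\in\{[3,2p-2],\,[4,2p-1]\}$ when $q=2p-4$. Note also that $|S|=q$ equals the number of edges, so distinct edges receive distinct sums.

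The key step is a counting estimate for the vertices $x$ and $y$ with $f(x)=1$ and $f(y)=p$. Every edge at $x$ has sum in $[3,p+1]$, every edge at $y$ other than $xy$ has sum in $[p+2,2p-1]$, and these two ranges are disjoint with union $[3,2p-1]$. Because edge-sums are distinct and all lie in $S$, the number of edges incident to $x$ or to $y$ is at most $|S\cap[3,p+1]|+|S\cap[p+2,2p-1]|=|S|=q$. As $G$ has exactly $q$ edges, it follows that \emph{every} edge of $G$ meets $\{x,y\}$ and that both halves of the estimate are tight: the edges at $x$ realize precisely the sums in $S\cap[3,p+1]$, and the edges at $y$ not at $x$ realize precisely the sums in $S\cap[p+2,2p-1]$.

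It then remains to read off the adjacencies. A sum $s$ realized at $x$ comes from the unique edge joining $x$ to the vertex labelled $s-1$, and a sum $s$ realized at $y$ from the edge joining $y$ to the vertex labelled $s-p$, so tightness pins down the full neighbourhoods of $x$ and of $y$ in each of the three cases for $S$. For $S=[3,2p-1]$ this makes $x$ adjacent to all other vertices and $y$ adjacent to every vertex with label in $[2,p-1]$; in particular $x$, $y$ and the vertex labelled $2$ form a triangle. The two sub-cases with $q=2p-4$ go the same way: in each the sum $p+1$ is forced to occur and, within the group to which it belongs, can only come from the edge $xy$, so $xy\in E(G)$; moreover $x$ and $y$ are forced to share a neighbour — the vertex labelled $2$ when $S=[3,2p-2]$, the vertex labelled $3$ when $S=[4,2p-1]$ — so again a triangle appears.

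The remaining points are purely bookkeeping: determining in each case exactly which sums are forced at $x$ and at $y$, and ensuring the prospective common neighbour of $x$ and $y$ is genuinely a third vertex, which needs $|V(G)|$ to be large enough; the finitely many graphs of smaller order meeting the hypothesis are dealt with by direct inspection. The one genuinely new observation, beyond the bound $q\le 2p-3$ of Enomoto et al.\ recalled above, is that when $q$ comes within $1$ of that bound the edges are compelled to concentrate on the two extreme-label vertices, forcing those vertices to be (almost) universal — and two (almost) universal vertices in a graph on at least three vertices immediately yield a triangle. I expect this concentration phenomenon, rather than any single computation, to be the conceptual crux; the sub-case analysis itself is routine.
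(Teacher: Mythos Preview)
First, note that the paper does not actually contain a proof of this statement: it is quoted from \cite{IMR} and stated without argument. So there is no ``paper's own proof'' to compare your attempt against. What can be said is that your argument, as written, does not go through.

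The central step is a non-sequitur. You correctly observe that edges at $x$ have sums in $[3,p+1]$ and that edges at $y$ other than $xy$ have sums in $[p+2,2p-1]$; from this you conclude that the number of edges meeting $\{x,y\}$ is at most $|S\cap[3,p+1]|+|S\cap[p+2,2p-1]|=|S|=q$, and then infer that \emph{every} edge meets $\{x,y\}$. But the inequality ``(edges meeting $\{x,y\}$) $\le q$'' is trivially true in any graph with $q$ edges and carries no information whatsoever. For your conclusion you would need the reverse inequality, and that is false in general: a sum like $5$ lying in $S\cap[3,p+1]$ need not come from an edge at $x$, since $5=2+3$ is equally possible. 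Concretely, take $p=5$, label the vertices $1,\dots,5$, and let the edges be $\{1,3\},\{2,3\},\{2,4\},\{3,4\},\{3,5\},\{4,5\}$. The edge-sums are $4,5,6,7,8,9$, so $S=[4,9]$ and this is a bona fide super edge-magic graph with $q=6=2p-4$. Yet $x$ (label~$1$) has degree~$1$ and $y$ (label~$5$) has degree~$2$; three of the six edges avoid both. Your ``concentration'' claim, and with it the remainder of the argument, collapses. (The example still contains the triangle $\{2,3,4\}$, so the theorem is not contradicted --- only your route to it.)

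What is missing is exactly the triangle-freeness hypothesis, which you never invoke before asserting concentration. A correct line of attack --- and the one the present paper uses in the closely related proofs of Theorems~2.3 and~2.4 --- is to assume $G$ triangle-free and then argue inductively that the sums $3,4,5,\ldots$ are forced, one at a time, to be realised at $x$: sum~$3$ gives $\{1,2\}$, sum~$4$ gives $\{1,3\}$, sum~$5$ cannot be $\{2,3\}$ without closing a triangle so must be $\{1,4\}$, and so on, until $x$ is adjacent to every other vertex; the next required sum then unavoidably produces a triangle. It is this step-by-step forcing, not a global counting estimate, that drives the result.
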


\section{General results}
In this section, we will establish some general results on super edge-magic graphs. 
We begin with the following result, which relates the order, girth and the cardinality of $\sigma_{G}$.

\begin{theorem}
\label{q=2p-3}
Let $G$ be a super edge-magic graph of size $2\left\vert V\left( G\right) \right\vert-3$ 
and girth $g\left(G\right)=3$. 
Then $\left\vert \sigma_{G} \right\vert=1$.
\end{theorem}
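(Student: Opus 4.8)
The plan is to push everything through Lemma~\ref{trivial} and then observe that the size hypothesis leaves no freedom at all in the set of edge sums. Write $p=\left\vert V\left(G\right)\right\vert$ and $q=\left\vert E\left(G\right)\right\vert=2p-3$. By Lemma~\ref{trivial}, a super edge-magic labeling of $G$ is exactly a bijection $f:V\left(G\right)\rightarrow\left[1,p\right]$ for which $S=\left\{f\left(u\right)+f\left(v\right):uv\in E\left(G\right)\right\}$ is a set of $q$ consecutive integers, and in that case the resulting super edge-magic labeling has valence $k=p+q+\min\left(S\right)$.

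The decisive observation is that, for every edge $uv$, the values $f\left(u\right)$ and $f\left(v\right)$ are distinct elements of $\left[1,p\right]$, whence $3\leq f\left(u\right)+f\left(v\right)\leq 2p-1$ and therefore $S\subseteq\left[3,2p-1\right]$. Since $\left[3,2p-1\right]$ contains precisely $2p-3=q=\left\vert S\right\vert$ integers and $S$ is an interval of integers, we are forced to have $S=\left[3,2p-1\right]$; in particular $\gamma\left(G\right)=\min\left(S\right)=3$ for every super edge-magic labeling of $G$. Substituting this into the formula of Lemma~\ref{trivial} gives $k=p+q+3=3p$, so every super edge-magic labeling of $G$ has the same valence $3p$. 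This already yields $\sigma_{G}\subseteq\left\{3p\right\}$.

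To finish I would check that $3p$ really lies in $\sigma_{G}$, that is, that $3p\in I_{G}$. Because $G$ is super edge-magic it possesses at least one super edge-magic labeling; evaluating at this labeling the displayed valence formula for super edge-magic labelings exhibits $3p$ as a member of $S_{G}$, and since $3p$ is an integer with $\min S_{G}\leq 3p\leq\max S_{G}$ it follows that $3p\in\left[\lceil\min S_{G}\rceil,\lfloor\max S_{G}\rfloor\right]=I_{G}$ (in particular $I_{G}$ is nonempty and $\sigma_{G}$ is well defined). Hence that labeling witnesses $3p\in\sigma_{G}$, and combining with the previous paragraph gives $\sigma_{G}=\left\{3p\right\}$, so $\left\vert\sigma_{G}\right\vert=1$.

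I do not anticipate a serious obstacle: the entire argument is a counting squeeze forcing $S=\left[3,2p-1\right]$. The one point deserving a careful line is the last one, namely confirming that the forced valence $3p$ actually belongs to the super edge-magic interval $I_{G}$, so that $\sigma_{G}$ is nonempty and not merely contained in $\left\{3p\right\}$. It is also worth remarking that the hypothesis $g\left(G\right)=3$ is automatically satisfied under the size assumption: a super edge-magic graph with $\left\vert E\left(G\right)\right\vert=2\left\vert V\left(G\right)\right\vert-3$ contains a triangle by the result of \cite{IMR} recalled above, hence has girth $3$; the hypothesis is thus stated only for emphasis and plays no further role in the proof.
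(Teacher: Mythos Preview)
Your argument is correct and follows the same route as the paper: both use Lemma~\ref{trivial} together with the observation that $S\subseteq[3,2p-1]$ has the same cardinality $2p-3$ as $[3,2p-1]$, forcing $S=[3,2p-1]$ and hence the unique valence $3p$. Your write-up is in fact more careful than the paper's---you spell out the counting squeeze, verify $3p\in I_G$ so that $\sigma_G$ is nonempty, and correctly note that the girth hypothesis is redundant---whereas the paper simply asserts the equality $S=[3,2|V(G)|-1]$ and concludes.
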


\begin{proof}
Consider such a graph $G$. 
If $f$ is a super edge-magic labeling of $G$, then 
\begin{equation*}
\left\{ f\left( u\right) +f\left( v\right): uv\in E\left( G\right) \right\}=\left[ 3, 2\left\vert V\left( G\right) \right\vert-1\right]
\end{equation*}
and 
\begin{equation*}
\text{val}\left(f\right)=\left\vert V\left( G\right) \right\vert +\left\vert E\left( G\right) \right\vert +\min\left\{ f\left( u\right) +f\left( v\right) : uv\in E\left( G\right) \right\}=3\left\vert V\left( G\right) \right\vert
\end{equation*}
by Lemma \ref{trivial}.
Therefore, $\sigma_{G} =\left\{ 3\left\vert V\left( G\right) \right\vert \right\}$, 
completing the proof.
\end{proof}

The next concept will prove to be useful throughout this paper. 
Let $f$ be a super edge-magic labeling of a graph $G$. Then the \emph{complementary labeling} $\overline{f}$ of $f$ is defined as 
\begin{equation*}
\overline{f}\left(x\right)=\left\{ 
\begin{tabular}{ll}
$\left(\left\vert V\left( G\right) \right\vert +1\right)-f\left(x\right)$ & if $x \in V\left(G\right)$ \\ 
$\left(2\left\vert V\left( G\right) \right\vert +\left\vert E\left( G\right) \right\vert +1\right)-f\left(x\right)$ & if $x \in E\left(G\right)$.%
\end{tabular}%
\right.
\end{equation*}

It is true that if $f$ is a super edge-magic labeling of a graph $G$, then $\overline{f}$ is also a super edge-magic labeling of $G$. 

Next, we show an example of a super edge-magic labeling of a graph $G$ with 
$\left\vert E\left( G\right) \right\vert =2\left\vert V\left( G\right) \right\vert -4$ and labeled with two different super edge-magic labelings $f$ and $\overline{f}$ (see Figure \ref{fig06} and Figure \ref{fig07}, respectively), producing two different valences.
On the orther hand, if $f$ is an edge-magic labeling of a graph $G$, then the complementary labeling $\overline{f}$ of $f$ is the labeling
$\overline{f}:V\left(G\right) \cup E\left(G\right)\rightarrow \left[1, \left\vert V\left( G\right) \right\vert +\left\vert E\left( G\right) \right\vert \right]$
defined by 
\begin{equation*}
\overline{f}\left(x\right)=\left(\left\vert V\left( G\right) \right\vert +\left\vert E\left( G\right) \right\vert +1\right)-f\left(x\right)
\end{equation*}
if $x \in V\left(G\right)\cup E\left(G\right)$.
Observe that the complementary labeling of an edge-magic labeling is also an edge-magic labeling.

\begin{figure}[ht]
  \begin{center}
    \includegraphics[keepaspectratio, width=0.3\linewidth]{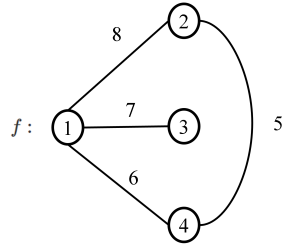}
    \caption{A super edge-magic labeling $f$ with valence 11}
    \label{fig06}
  \end{center}
\end{figure}

\begin{figure}[ht]
  \begin{center}
    \includegraphics[keepaspectratio, width=0.3\linewidth]{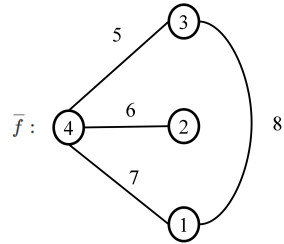}
    \caption{A super edge-magic labeling $\overline{f}$ with valence 12}
    \label{fig07}
  \end{center}
\end{figure}

\begin{theorem}
\label{q=2p-4}
Let $G$ be a super edge-magic graph of size
$2\left\vert V\left( G\right) \right\vert-4$. 
Then $\left\vert \sigma_{G} \right\vert=2$.
\end{theorem}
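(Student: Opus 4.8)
The plan is to first show that at most two valences can occur and then that both of them are actually realized. Write $p=\left\vert V\left(G\right)\right\vert$, so that $\left\vert E\left(G\right)\right\vert=2p-4$. For the upper bound I would take an arbitrary super edge-magic labeling $f$ of $G$ and look at the set $S=\left\{f\left(u\right)+f\left(v\right):uv\in E\left(G\right)\right\}$ of Lemma~\ref{trivial}. Since the restriction of $f$ to $V\left(G\right)$ is a bijection onto $\left[1,p\right]$, every edge sum satisfies $3\leq f\left(u\right)+f\left(v\right)\leq 2p-1$, so $S\subseteq\left[3,2p-1\right]$; as $\left[3,2p-1\right]$ has $2p-3$ integers while $S$, by Lemma~\ref{trivial}, is a set of $2p-4$ consecutive integers, $S$ must be one of the two blocks $\left[3,2p-2\right]$ or $\left[4,2p-1\right]$. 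The valence formula $k=p+\left\vert E\left(G\right)\right\vert+\min\left(S\right)$ of Lemma~\ref{trivial} then gives $\text{val}\left(f\right)\in\left\{3p-1,3p\right\}$, and hence $\left\vert\sigma_{G}\right\vert\leq 2$.

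To obtain the reverse inequality I would use the complementary labeling $\overline{f}$, which is again super edge-magic. Plugging the definition of $\overline{f}$ into $\overline{f}\left(u\right)+\overline{f}\left(v\right)+\overline{f}\left(uv\right)$ for an edge $uv$ and simplifying gives $\text{val}\left(\overline{f}\right)=\left(4p+\left\vert E\left(G\right)\right\vert+3\right)-\text{val}\left(f\right)=6p-1-\text{val}\left(f\right)$, which sends $3p-1$ to $3p$ and $3p$ to $3p-1$. Since $G$ is super edge-magic it has some super edge-magic labeling $f$, whose valence is one of $3p-1$, $3p$; then $\overline{f}$ has the other. So $G$ possesses super edge-magic labelings of both valences $3p-1$ and $3p$. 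Each of these is an integer lying in the super edge-magic interval $I_{G}$, because a valence of any super edge-magic labeling belongs to $S_{G}$, hence lies in $\left[\min S_{G},\max S_{G}\right]$, and being an integer therefore lies in $I_{G}=\left[\lceil\min S_{G}\rceil,\lfloor\max S_{G}\rfloor\right]$ (in particular $I_{G}\neq\emptyset$). Consequently $\sigma_{G}=\left\{3p-1,3p\right\}$, which is what we want.

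The whole argument is short once the two tools---Lemma~\ref{trivial} for the upper bound and the complementary labeling for the lower bound---are in place, and it closely parallels the proof of Theorem~\ref{q=2p-3}; the single extra integer in the interval $\left[3,2p-1\right]$ is exactly what turns $\left\vert\sigma_{G}\right\vert=1$ there into $\left\vert\sigma_{G}\right\vert=2$ here. The main thing to be careful about is the bookkeeping: pinning $S$ down to precisely the two blocks $\left[3,2p-2\right]$ and $\left[4,2p-1\right]$, and computing the additive constant $4p+\left\vert E\left(G\right)\right\vert+3=6p-1$ correctly so that complementation really does interchange the two candidate valences rather than fixing one of them.
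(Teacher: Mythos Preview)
Your proof is correct and follows the same two-step strategy as the paper: bound $\left\vert\sigma_{G}\right\vert\leq 2$ by pinning down the set $S$ via Lemma~\ref{trivial}, then realize both valences using the complementary labeling $\overline{f}$. The only cosmetic difference is that the paper tracks the specific edge sums (the edge between the vertices labeled $1$ and $2$ under $f$ becomes an edge with sum $2p-1$ under $\overline{f}$), whereas you compute the clean identity $\text{val}\bigl(\overline{f}\bigr)=6p-1-\text{val}\left(f\right)$ directly; your extra remark checking that the two valences lie in $I_{G}$ is a point the paper leaves implicit.
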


\begin{proof}
Let $G$ be a super edge-magic graph with 
$\left\vert E\left( G\right) \right\vert=2\left\vert V\left( G\right) \right\vert-4$ and $f$ be a super edge-magic labeling of $G$. 
Then it is clear that 
\begin{equation*}
\left\{ f\left( u\right) +f\left( v\right) : uv\in E\left( G\right) \right\}=\left[ 3, 2\left\vert V\left( G\right) \right\vert-2\right]
\end{equation*}
or
\begin{equation*}
\left\{ f\left( u\right) +f\left( v\right) : uv\in E\left( G\right) \right\}=\left[ 4, 2\left\vert V\left( G\right) \right\vert-1\right] \text{.}
\end{equation*}
Since each one of these two possibilities will provide a different valence, 
it follows that $\left\vert \sigma_{G} \right\vert \leq 2$.
However, we can guarantee that $\left\vert \sigma_{G} \right\vert=2$.
To see this, notice that the only way to get $3$ as an induced edge sum is by joining vertices, which have been labeled $1$ and $2$ by a super edge-magic labeling $f$ of $G$.
Now, if we consider the super edge-magic labeling $\overline{f}$, the vertices to which $f$ assignes labels $1$ and $2$ become labeled 
$\left\vert V\left( G\right) \right\vert $ and $\left\vert V\left( G\right) \right\vert  -1$ by $\overline{f}$, respectively.
Moreover, since they are adjacent, it follows that there is an edge with induced sum $2\left\vert V\left( G\right) \right\vert-1$ when we consider $\overline{f}$.
Thus, $f$ and $\overline{f}$ have valences 
\begin{equation*}
\text{val}\left(f\right)=3+\left\vert V\left( G\right) \right\vert +2 \left\vert V\left( G\right) \right\vert -4=3\left\vert V\left( G\right) \right\vert -1
\end{equation*}
and 
\begin{equation*}
\text{val}\left(\overline{f}\right)=\left(2\left\vert V\left( G\right) \right\vert -1\right)+\left\vert V\left( G \right) \right\vert +1=3\left\vert V\left( G\right) \right\vert \text{,}
\end{equation*}
respectively.
In a similar way, if $f$ is a labeling that has an induced edge sum 
$2\left\vert V\left( G\right) \right\vert -1$, then vertices labeled $\left\vert V\left( G\right) \right\vert $ and 
$\left\vert V\left( G\right) \right\vert -1$ must be adjacent.
Therefore, taking the complement of this labeling, we obtain the desired result.
\end{proof}

The following result was shown in \cite{IMR} (see also \cite{LM} for a different approach to this problem using the product $\otimes_{h}$ defined on digraphs in \cite{FIMR}). 

\begin{theorem}
There exists an infinite family of super edge-magic graphs $G$ of size $2\left\vert V\left( G\right) \right\vert -5$ and girth $g\left(G\right)=5$.
\end{theorem}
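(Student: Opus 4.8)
The plan is to reduce to Lemma~\ref{trivial} and then exhibit an explicit infinite family. By that lemma it is enough to produce, for infinitely many integers $p$, a graph $G$ on the labelled vertex set $[1,p]$ with exactly $2p-5$ edges, with girth exactly $5$, and such that the set of induced edge sums $\left\{ f(u)+f(v):uv\in E(G)\right\}$ is an interval of $2p-5$ consecutive integers (equivalently, each integer in that interval is realised by exactly one edge, since there are $2p-5$ edges). The base case is $C_{5}$ itself, which has $5=2\cdot 5-5$ edges and girth $5$: reading the labels around the cycle as $1,3,5,2,4$ produces the consecutive edge sums $4,5,6,7,8$, so $C_{5}$ is super edge-magic with the required parameters.

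To produce infinitely many examples I would inflate $C_{5}$, keeping it recognisable inside the new graph. A clean way to do this is the digraph product $\otimes_{h}$ of \cite{FIMR}: orient $C_{5}$ consistently with the super edge-magic labelling above to obtain a super edge-magic digraph $\vec{C_{5}}$, fix an integer $m$, and form $\vec{C_{5}}\otimes_{h}\Gamma$ for a family $\Gamma$ of digraphs on a common vertex set of size $m$, with $h$ assigning to each of the five arcs of $\vec{C_{5}}$ a member of $\Gamma$. Such a product has $5m$ vertices, and all of its edges run between the five ``fibres'' $F_{1},\dots ,F_{5}$ exactly as the edges of $C_{5}$ run between its vertices; in particular every cycle of the product projects onto a closed walk of the same length in $C_{5}$, and since $C_{5}$ has no closed walk of length $3$, the product is triangle-free for free. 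A $4$-cycle of the product must lie inside the bipartite graph joining two consecutive fibres, so the product has girth exactly $5$ provided each of these five bipartite graphs is $C_{4}$-free and dense enough that some consistent $5$-cycle around the fibres survives. Choosing the members of $\Gamma$ to have $2m-1$ arcs makes the number of edges equal $5(2m-1)=2(5m)-5$, and the theorem on the $\otimes_{h}$ product — applied with the above labelling of $\vec{C_{5}}$ and with $\Gamma$ chosen to satisfy its hypotheses — guarantees that the product is super edge-magic. Letting $m$ range over a suitable infinite set then yields the family.

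The step I expect to be the main obstacle is making a single choice of the fibre digraphs meet all the requirements at once: the $\otimes_{h}$ theorem constrains them (and the base labelling) fairly rigidly, $C_{4}$-freeness of the five inter-fibre bipartite graphs must hold simultaneously, and the arc counts must hit $2m-1$ exactly so that the edge count comes out to $2p-5$. If the $\otimes_{h}$ route proves too rigid, the fallback is a direct construction: fix the target sum interval, say $[4,2p-2]$, realise the sum $s$ by an edge $\left\{ i_{s},\, s-i_{s}\right\}$ with the indices $i_{s}$ following a periodic pattern, and then verify by hand that (i) the labelling is a bijection onto $[1,p]$, (ii) the multiset of edge sums is exactly that interval, so all sums are distinct, and (iii) there is a $C_{5}$ but no $C_{3}$ or $C_{4}$. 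In either approach the delicate point is (iii): forcing the edge sums to fill an interval builds in so much additive structure (any $4$-cycle on edges of sums $s_{1},s_{2},s_{3},s_{4}$ forces $s_{1}+s_{3}=s_{2}+s_{4}$, and an interval is saturated with such quadruples) that one must argue the particular incidences never close up a short cycle — which is exactly why the girth can jump from $3$ to $5$ here, unlike in the cases $2\left\vert V\left(G\right)\right\vert -3$ and $2\left\vert V\left(G\right)\right\vert -4$, where a triangle is forced.
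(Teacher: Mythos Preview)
The paper does not prove this theorem at all; it simply cites it from \cite{IMR} and remarks that \cite{LM} supplies an alternative construction via the $\otimes_{h}$ product of \cite{FIMR}. There is therefore no in-paper proof to compare against, and your $\otimes_{h}$ route is precisely the one the paper attributes to \cite{LM}, so you are pointed at a known, correct method.

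That said, what you have written is a plan rather than a proof. You flag the central gap yourself: you never specify the family $\Gamma$ of fibre digraphs, nor verify that a single choice simultaneously satisfies the $\otimes_{h}$ hypotheses, hits exactly $2m-1$ arcs per fibre, and yields girth $5$. There is also a slip in the girth analysis: your claim that a $4$-cycle of the product ``must lie inside the bipartite graph joining two consecutive fibres'' is false. The projection to $C_{5}$ of a $4$-cycle can be the closed walk $1\text{--}2\text{--}3\text{--}2$ (vertices $a\in F_{1}$, $b,d\in F_{2}$, $c\in F_{3}$) or $2\text{--}1\text{--}5\text{--}1$, so a $4$-cycle can straddle three consecutive fibres and use edges from two distinct inter-fibre bipartite pieces; $C_{4}$-freeness of each bipartite piece separately does not suffice. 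Your fallback direct construction is likewise only a sketch. To make this a proof you must either reproduce the explicit family from \cite{IMR} and check girth and the consecutive-sums condition, or carry the $\otimes_{h}$ construction through with a concrete $\Gamma$ and a complete girth verification.
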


We next show the following result.

\begin{theorem}
Let $G$ be a super edge-magic graph with $\left\vert E\left( G\right) \right\vert =2\left\vert V\left( G\right) \right\vert -5$ and girth $g\left(G\right) \geq 5$. 
Then $\left\vert \sigma_{G} \right\vert=1$.
\end{theorem}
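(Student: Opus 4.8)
\emph{Plan.} Write $p=\left\vert V\left(G\right)\right\vert$ and $q=\left\vert E\left(G\right)\right\vert=2p-5$. Since $g\left(G\right)\geq 5$, the graph $G$ contains a cycle, and any cycle uses at least $g\left(G\right)\geq 5$ vertices, so $p\geq 5$. The strategy is to use Lemma \ref{trivial} to reduce to a very short list of admissible valences, and then eliminate all but one of them using the girth hypothesis.

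First, let $f$ be an arbitrary super edge-magic labeling of $G$. By Lemma \ref{trivial} we may regard $f$ on $V\left(G\right)$ as a bijection onto $\left[1,p\right]$ whose edge-sum set $S=\left\{f\left(u\right)+f\left(v\right):uv\in E\left(G\right)\right\}$ is an interval of $q$ consecutive integers; moreover, since $\left\vert S\right\vert=q=\left\vert E\left(G\right)\right\vert$, each element of $S$ is the sum of exactly one edge. Every edge-sum lies in $\left[3,2p-1\right]$, which contains exactly $2p-3=q+2$ integers, so there are only three possibilities: $S=\left[3,2p-3\right]$, $S=\left[4,2p-2\right]$, or $S=\left[5,2p-1\right]$, with corresponding valence $p+q+\min S\in\left\{3p-2,3p-1,3p\right\}$. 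Thus it suffices to show that $\min S=3$ and $\min S=5$ cannot occur.

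The crux is to rule out $\min S=3$. Identifying vertices with their $f$-labels, I would prove by induction on $k$ that for $3\leq k\leq p+1$ the unique edge with sum $k$ is $\left\{1,k-1\right\}$: any pair $\left\{a,k-a\right\}$ realizing sum $k$ other than $\left\{1,k-1\right\}$ has $2\leq a<k-a\leq p$, so $a$ and $k-a$ both lie in $\left[2,k-2\right]$ and the induction hypothesis already forces $1$ to be adjacent to both of them; hence $\left\{1,a,k-a\right\}$ would be a triangle, contradicting $g\left(G\right)\geq 5$. Consequently vertex $1$ is adjacent to all of $2,3,\ldots,p$, accounting for the $p-1$ edges with sums $3,\ldots,p+1$; the remaining $q-\left(p-1\right)=p-4\geq 1$ edges then have sums in $\left[p+2,2p-3\right]$ and therefore join two vertices of $\left\{2,\ldots,p\right\}$, each adjacent to $1$ — another triangle, a contradiction. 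Hence $\min S\neq 3$.

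Finally, $\min S=5$ is disposed of via the complementary labeling $\overline{f}$, which is again super edge-magic and whose edge-sum set is $\left\{2p+2-x:x\in S\right\}$; if $S=\left[5,2p-1\right]$ then $\overline{f}$ has edge-sum set $\left[3,2p-3\right]$, which was just shown to be impossible. Therefore every super edge-magic labeling of $G$ has $\min S=4$ and valence $3p-1$, so, as $G$ is super edge-magic, $\sigma_{G}=\left\{3p-1\right\}$ and $\left\vert\sigma_{G}\right\vert=1$. The one delicate point is the induction in the third paragraph; everything else is routine bookkeeping with Lemma \ref{trivial} and the complementary labeling.
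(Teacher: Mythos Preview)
Your proof is correct and follows essentially the same route as the paper's: reduce via Lemma~\ref{trivial} to the three candidate intervals for $S$, then eliminate $S=[3,2p-3]$ by showing the triangle-free condition forces vertex~$1$ to be adjacent to all of $2,\ldots,p$, leaving no room for the remaining edge(s). The only notable difference is that the paper dispatches $S=[5,2p-1]$ by saying ``in a similar way'' (i.e.\ a symmetric top-down argument), whereas you invoke the complementary labeling $\overline{f}$ to reduce it to the already-handled case $S=[3,2p-3]$; this is a clean shortcut but not a genuinely different strategy.
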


\begin{proof}
Let $G$ be a graph of girth $g\left(G\right) \geq 5$ and with a super edge-magic labeling $f$. 
Consider the set
$S=\left\{ f\left( u\right) +f\left( v\right) : uv\in E\left( G\right) \right\}$. 
Then there are three possibilities for $S$, namely, 
\begin{equation*}
S=\left[ 3, 2\left\vert V\left( G\right) \right\vert-3\right],
S=\left[ 4, 2\left\vert V\left( G\right) \right\vert-2\right] \text{ or } 
S=\left[ 5, 2\left\vert V\left( G\right) \right\vert-1\right] \text{.}
\end{equation*}

Assume, to the contrary, that $S=\left[ 3, 2\left\vert V\left( G\right) \right\vert-3\right]$. 
Now, the only way to get the induced edge sum $3$ is by joining vertices labeled $1$ and $2$. 
The only way to get the induced edge sum $4$ is by joining vertices labeled $1$ and $3$. 
Hence, $G$ has edges joining those vertices. 
To get the induced edge sum $5$, we cannot use vertices labeled $2$ and $3$, since this choice would force a triangle and would have girth $3$. 
Thus, we need to include an edge joining vertices labeled by $1$ and $4$.
For a similar reason, to get the induced edge sum $6$, we need to include an edge joining vertices labeled by $1$ and $5$.
Any other choice would produce a triangle. 
We proceed in this manner until we get the induced edge sum 
$\left\vert V\left( G\right) \right\vert +1$.
At this point, we have a star of order $\left\vert V\left( G\right) \right\vert$ (see Figure \ref{fig01}).
Thus, we cannot add any new edge to get the induced edge sum $\left\vert V\left( G\right) \right\vert +2$, 
since any new edge would produce a triangle. 
In a similar way, we can see that the set $S=\left[ 5, 2\left\vert V\left( G\right) \right\vert-1\right]$ cannot take place.
Therefore, the only possibility left is $S=\left[ 4, 2\left\vert V\left( G\right) \right\vert-2\right]$, and the result follows.
\end{proof}

\begin{figure}[ht]
  \begin{center}
    \includegraphics[keepaspectratio, width=0.18\linewidth]{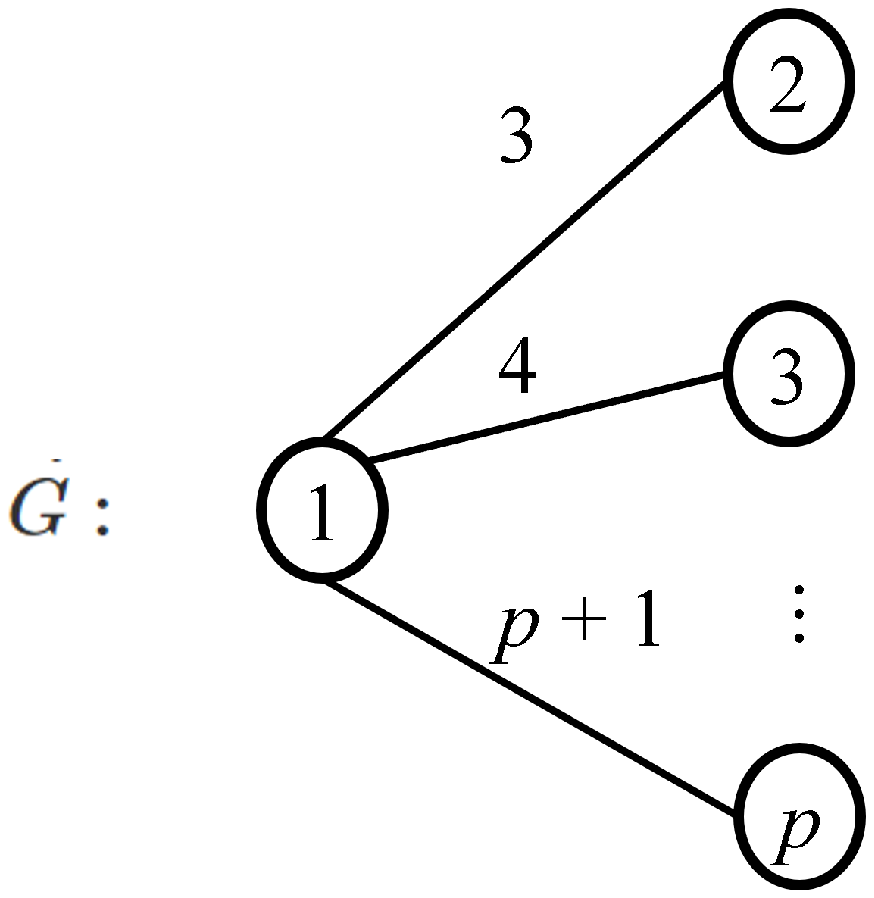}
    \caption{A star $G$ of order $\left\vert V\left( G\right) \right\vert=p$ with induced edge sums}
    \label{fig01}
  \end{center}
\end{figure}

To conclude this section, we present the following result.

\begin{theorem}
Let $G$ be a graph with $\left\vert E\left( G\right) \right\vert  \geq \left\vert V\left( G\right) \right\vert $. 
If $G$ is a super edge-magic graph of girth $g\left(G\right) \geq 4$, 
then $3+\left\vert E\left( G\right) \right\vert + \left\vert V\left( G\right) \right\vert $ is not a valence for any super edge-magic labeling of $G$.
\begin{proof}
Let $G$ be a super edge-magic graph with $\left\vert E\left( G\right) \right\vert \geq \left\vert V\left( G\right) \right\vert $, girth $g\left(G\right) \geq 4$
and with a super edge-magic labeling $f$.
Assume, to the contrary, that $\text{val}\left(f\right)= 3+\left\vert V\left( G\right) \right\vert + \left\vert E\left( G\right) \right\vert $.
Then 
\begin{equation*}
3+\left\vert V\left( G\right) \right\vert + \left\vert E\left( G\right) \right\vert =\min\left\{ f\left( u\right) +f\left( v\right) : uv\in E\left( G\right)
 \right\}+\left\vert V\left( G\right) \right\vert + \left\vert E\left( G\right) \right\vert  \text{,}
\end{equation*}
implying that 
\begin{equation*}
\min \left\{ f\left( u\right) +f\left( v\right) : uv\in E\left( G\right) \right\}=3   \text{.}
\end{equation*} 
Now, the only way to obtain an edge with induced sum $3$ is by joining vertices labeled $1$ and $2$. 
Also, the only way to obtain an edge with induced sum $4$ is by joining vertices labeled $1$ and $3$. 
Hence, if we want to avoid triangles, 
we need to induce the edge sums with labels 
$\{1,4\}, \{1,5\}, \dots, \{1, \left\vert V\left( G\right) \right\vert \}$ 
to obtain the induced edge sums $5,6,\dots,\left\vert V\left( G\right) \right\vert+1$, 
since any other choice would produce a triangle.
Moreover, notice that any choice for the edge with induced sum $\left\vert V\left( G\right) \right\vert+2$ will produce a triangle, which contradicts the fact that $g\left(G\right) \geq 4$.
\end{proof}
\end{theorem}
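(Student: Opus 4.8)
The plan is to argue by contradiction using Lemma~\ref{trivial}. Suppose $f$ is a super edge-magic labeling of $G$ with $\text{val}\left(f\right)=3+\left\vert V\left(G\right)\right\vert+\left\vert E\left(G\right)\right\vert$. By Lemma~\ref{trivial} we may regard $f$ as a bijection $V\left(G\right)\rightarrow\left[1,\left\vert V\left(G\right)\right\vert\right]$, the set $S=\left\{f\left(u\right)+f\left(v\right):uv\in E\left(G\right)\right\}$ is an interval of $\left\vert E\left(G\right)\right\vert$ consecutive integers, and $\text{val}\left(f\right)=\left\vert V\left(G\right)\right\vert+\left\vert E\left(G\right)\right\vert+\min\left(S\right)$; hence $\min\left(S\right)=3$ and so $S=\left[3,\left\vert E\left(G\right)\right\vert+2\right]$. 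Since $\left\vert E\left(G\right)\right\vert\geq\left\vert V\left(G\right)\right\vert$, the interval $S$ contains every integer in $\left[3,\left\vert V\left(G\right)\right\vert+1\right]$ as well as the integer $\left\vert V\left(G\right)\right\vert+2$.

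First I would pin down the edges realizing the small sums. The sum $3$ forces an edge between the vertices labeled $1$ and $2$, and the sum $4$ forces an edge between the vertices labeled $1$ and $3$ (being the only decompositions into distinct positive parts). Then I would prove by induction on $m$, for $5\leq m\leq\left\vert V\left(G\right)\right\vert+1$, that the unique edge with induced sum $m$ joins the vertices labeled $1$ and $m-1$: any decomposition $m=a+\left(m-a\right)$ with $2\leq a<m-a$ has both summands in $\left[2,m-2\right]$, and by the inductive hypothesis the vertex labeled $1$ is already adjacent to every vertex labeled $2,3,\ldots,m-2$, so such an edge would close a triangle through the vertex labeled $1$, contradicting $g\left(G\right)\geq4$. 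Consequently the vertex labeled $1$ is adjacent to each of the vertices labeled $2,3,\ldots,\left\vert V\left(G\right)\right\vert$; that is, $G$ contains a spanning star centred at that vertex, which accounts for exactly the sums $3,4,\ldots,\left\vert V\left(G\right)\right\vert+1$ using $\left\vert V\left(G\right)\right\vert-1$ edges.

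Finally I would invoke the hypothesis $\left\vert E\left(G\right)\right\vert\geq\left\vert V\left(G\right)\right\vert$. Since the star uses only $\left\vert V\left(G\right)\right\vert-1$ edges, $G$ has at least one further edge, and since $\left\vert V\left(G\right)\right\vert+2\in S$, some edge has induced sum $\left\vert V\left(G\right)\right\vert+2$. The largest sum involving the vertex labeled $1$ is $1+\left\vert V\left(G\right)\right\vert=\left\vert V\left(G\right)\right\vert+1<\left\vert V\left(G\right)\right\vert+2$, so that edge joins two vertices whose labels $a,b$ lie in $\left[2,\left\vert V\left(G\right)\right\vert\right]$; but both of these vertices are adjacent to the vertex labeled $1$, so this edge creates a triangle, again contradicting $g\left(G\right)\geq4$. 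This contradiction shows $\min\left(S\right)\neq3$, and hence $3+\left\vert E\left(G\right)\right\vert+\left\vert V\left(G\right)\right\vert$ cannot be the valence of any super edge-magic labeling of $G$.

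The main obstacle is the inductive step that identifies every small-sum edge: one must check that when $m$ is even the ``middle'' decomposition $m=\left(m/2\right)+\left(m/2\right)$ is automatically excluded because $f$ is injective, and that all surviving decompositions genuinely have both parts in the already-handled range $\left[2,m-2\right]$, so that the girth hypothesis bites. Once this bookkeeping is carried out, the remainder follows immediately from Lemma~\ref{trivial} together with $g\left(G\right)\geq4$.
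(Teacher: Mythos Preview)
Your proposal is correct and follows essentially the same argument as the paper: both assume $\min(S)=3$ via Lemma~\ref{trivial}, force the edges with sums $3,4,\ldots,\left\vert V(G)\right\vert+1$ to be the star at the vertex labeled $1$ by the triangle-free hypothesis, and then obtain a contradiction from the edge with sum $\left\vert V(G)\right\vert+2$. Your write-up is in fact more careful than the paper's, since you spell out the induction explicitly, note that the decomposition $m=(m/2)+(m/2)$ is excluded by injectivity of $f$, and explain precisely why the edge realizing the sum $\left\vert V(G)\right\vert+2$ cannot involve the vertex labeled $1$.
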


\section{New results involving stars}

This section is devoted to study the valences for the edge-magic and super edge-magic labelings of the unions of stars and isolated vertices. 
We begin by presenting a result concerning with isomorphic labelings. 
To do this, we introduce the concept of isomorphic (super) edge-magic labelings. 
For a graph $G$, assume that $f$ and $g$ are two (super) edge-magic labelings of $G$. 
We denote by $G_{f}$ and $G_{g}$ the graph $G$, where the vertices of $G$ take the name of the labels assigned by the labelings $f$ and $g$, respectively. 
Then we compute the adjacency matrices of $G_{f}$ and $G_{g}$, respectively, and we denote them by $A\left(G_{f}\right)$ and $A\left(G_{g}\right)$, where the rows and columns are placed in increasing order from left to right and top to bottom, respectively. Notice that the rows and columns are not necessarily labeled with consecutive integers, since the vertex labels of an edge-magic labeling are not necessarily consecutive integers. Then labelings $f$ and $g$ are \emph{isomorphic labelings}, written $f\cong g$, if $A\left(G_{f}\right)=A\left(G_{g}\right)$.

For example, consider the graph $G=C_{4}\cup K_{1}$ with two edge-magic labelings $f$ and $g$ illustrated in Figure \ref{fig04}.

\begin{figure}[ht]
  \begin{center}
    \includegraphics[keepaspectratio, width=0.7\linewidth]{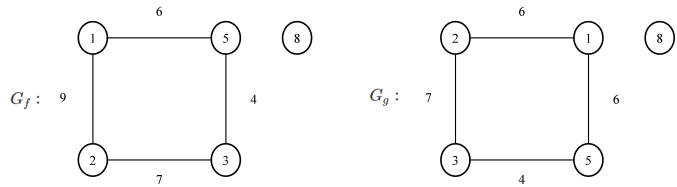}
    \caption{Two edge-magic labelings $f$ and $g$ of $G$}
    \label{fig04}
  \end{center}
\end{figure}

\noindent
Then 
\begin{equation*}
A\left(G_{f}\right)=A\left(G_{g}\right)=\bbordermatrix{ & 1 & 2 & 3 & 5 & 8 \cr 1 & 0 & 1 & 0 & 1 & 0 \cr 2 &1 & 0 & 1 & 0 & 0\cr3& 0 & 1 & 0 & 1 & 0 \cr 5& 1 & 0 & 1 & 0 & 0 \cr 8& 0 & 0 & 0 & 0 & 0 }
\end{equation*}
and hence $f\cong g$. 

Next, consider the graph $G=C_{4}\cup K_{1}$ labeled by $\overline{f}$ as illustrated in Figure \ref{fig05}.

\begin{figure}[ht]
  \begin{center}
    \includegraphics[keepaspectratio, width=0.4\linewidth]{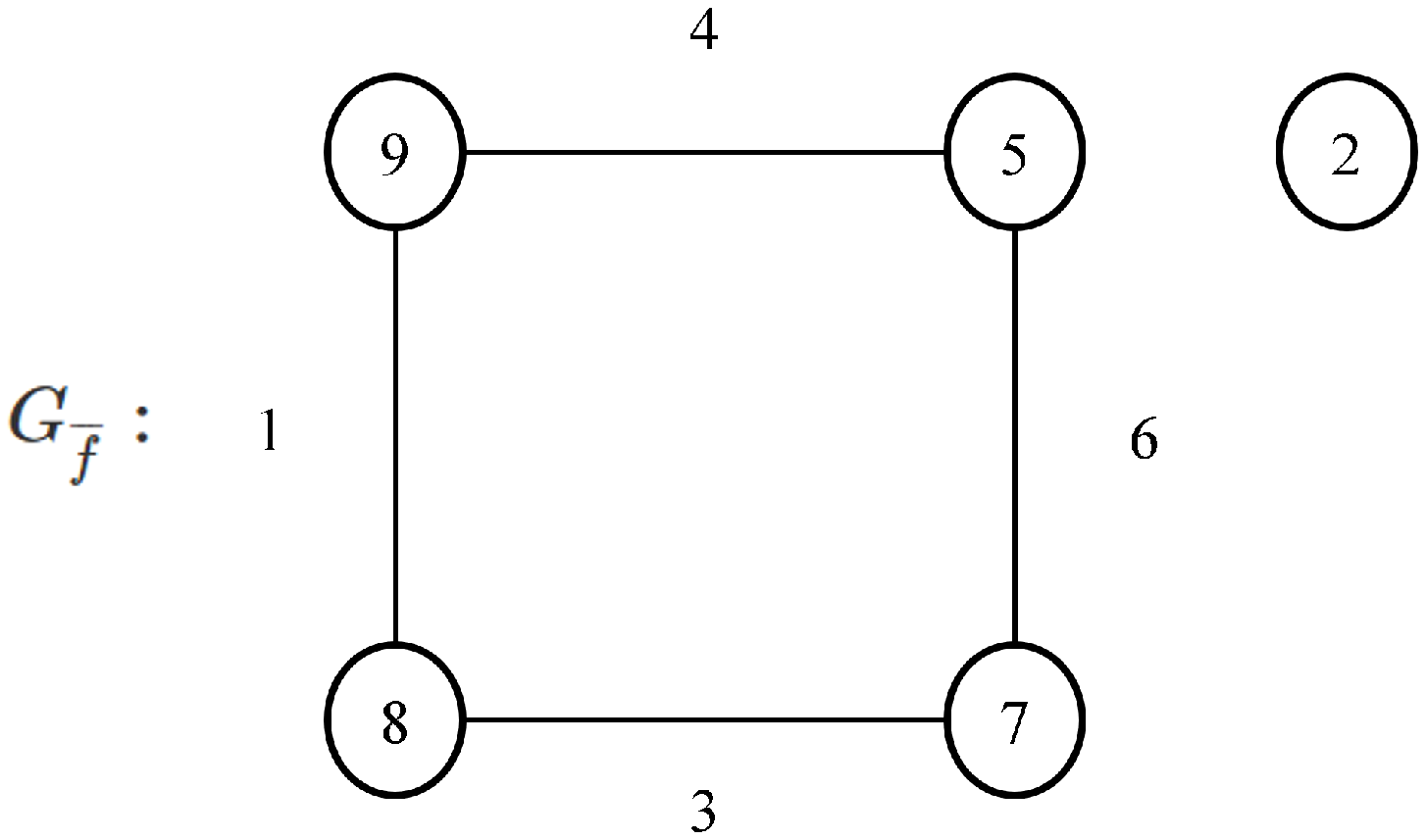}
    \caption{An edge-magic labeling $\overline{f}$ of $G$}
    \label{fig05}
  \end{center}
\end{figure}
\noindent
Then it is clear that $\overline{f}\not \cong f$ and $\overline{f}\not \cong g$, 
since 
\begin{equation*}
  A\left(G_{\overline{f}}\right)= \bbordermatrix{ & 2 & 5 & 7 & 8 & 9 \cr 2 & 0 & 0 & 0 & 0 & 0 \cr 5 & 0 & 0 & 1 & 0 & 1 \cr 7 & 0 & 1 & 0 & 1 & 0 \cr 8 & 0 & 0 & 1 & 0 & 1 \cr 9 & 0 & 1 & 0 & 1 & 0 } \text{,}
\end{equation*}
$A\left(G_{\overline{f}}\right) \not = A\left(G_{f}\right)$ and $A\left(G_{\overline{f}}\right) \not = A\left(G_{g}\right)$.

\begin{theorem}
For every two positive integers $n$ and $l$, there exist exactly $\left(l+1\right)\left(l+2\right)$ non-isomorphic super edge-magic labelings of $K_{1,n}\cup lK_{1}$.
\end{theorem}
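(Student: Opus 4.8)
The plan is to invoke Lemma~\ref{trivial} to convert the statement into a combinatorial counting problem. Write $G=K_{1,n}\cup lK_{1}$, let $z$ be the center of the star, $x_{1},\dots,x_{n}$ its leaves, and $y_{1},\dots,y_{l}$ the isolated vertices, so that $\left\vert V\left(G\right)\right\vert=n+l+1$, $\left\vert E\left(G\right)\right\vert=n$, and $E\left(G\right)=\left\{zx_{i}:i\in\left[1,n\right]\right\}$. By Lemma~\ref{trivial}, a super edge-magic labeling of $G$ is the same thing as a bijection $f:V\left(G\right)\to\left[1,n+l+1\right]$ for which $S=\left\{f\left(z\right)+f\left(x_{i}\right):i\in\left[1,n\right]\right\}$ consists of $n$ consecutive integers. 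Since $S$ is the translate by $f\left(z\right)$ of the set $\left\{f\left(x_{1}\right),\dots,f\left(x_{n}\right)\right\}$, this occurs precisely when the leaf labels form a block $\left[t+1,t+n\right]$; the condition $\left[t+1,t+n\right]\subseteq\left[1,n+l+1\right]$ forces $t\in\left[0,l+1\right]$, so there are exactly $l+2$ admissible blocks. Given such a block $B$, the center may receive any one of the $l+1$ labels outside $B$, and the $l$ still-unused labels are then assigned to $y_{1},\dots,y_{l}$ by an arbitrary bijection; conversely, Lemma~\ref{trivial} guarantees that every assignment of this shape extends to a super edge-magic labeling of $G$. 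Hence the super edge-magic labelings of $G$ are exactly the data: an admissible block $B$, a center label $c\notin B$, a bijection from the leaves onto $B$, and a bijection from the isolated vertices onto the remaining labels.

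Next I would count these labelings modulo the notion of isomorphic labelings introduced above. For a labeling $f$ with center label $c$ and leaf block $B$, the edge set of $G_{f}$ equals $\left\{\{c,b\}:b\in B\right\}$, that is, the set of all pairs containing $c$; therefore $A\left(G_{f}\right)$ is completely determined by the pair $\left(c,B\right)$ and is insensitive to how the individual leaves and individual isolated vertices are labeled. In particular, all labelings sharing a given pair $\left(c,B\right)$ are mutually isomorphic. Conversely, from $A\left(G_{f}\right)$ one recovers $c$ as the unique vertex of degree $n$ (this is the step that uses $n\geq 2$) and then $B$ as its set of neighbors, so labelings carrying different pairs $\left(c,B\right)$ are non-isomorphic. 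The isomorphism classes are thus in bijection with the pairs $\left(c,B\right)$ in which $B$ runs over the $l+2$ admissible blocks and $c$ runs over the $l+1$ labels outside $B$, and there are $\left(l+2\right)\left(l+1\right)$ of these.

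I do not anticipate a genuine obstacle. The consecutive-sum criterion of Lemma~\ref{trivial} rigidly forces the leaf labels to occupy a block, realizability of each block-and-center pair is immediate from the same lemma, and the invariance of $A\left(G_{f}\right)$ under permuting leaf labels (or isolated-vertex labels) is transparent. The single point that repays a careful sentence is the assertion that distinct pairs $\left(c,B\right)$ cannot yield the same adjacency matrix, which is settled by the degree argument above: locate the vertex of maximum degree and read off its neighborhood. Once these pieces are assembled, the value $\left(l+2\right)\left(l+1\right)$ follows by an elementary enumeration of blocks and centers.
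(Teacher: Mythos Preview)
Your proof takes essentially the same approach as the paper's: use Lemma~\ref{trivial} to deduce that the leaf labels must form a block of $n$ consecutive integers (giving $l+2$ choices), then observe that the center may receive any of the remaining $l+1$ labels, with the rest going to isolated vertices. The paper's write-up is terser and does not explicitly verify that distinct pairs $(c,B)$ give non-isomorphic labelings, whereas you supply that step via the degree argument; in doing so you correctly flag that recovering $c$ as the unique vertex of degree $n$ requires $n\geq 2$ (indeed, for $n=1$ the graph $K_{1,1}\cup lK_{1}$ has only $\binom{l+2}{2}$ non-isomorphic super edge-magic labelings, so the stated count is off by a factor of two in that boundary case).
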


\begin{proof}
Let $G=K_{1,n}\cup lK_{1}$, and define the graph $G$ with 
\begin{equation*}
V\left(G\right)=\{x\} \cup \{y_{i}: i\in \left[ 1, n\right] \}\cup \{z_{i}: i\in \left[ 1, l\right] \}
\end{equation*}
and $E\left(G\right)=\{xy_{i}: i\in \left[ 1, n\right] \}$.
By Lemma \ref{trivial}, the set $\{f\left(xy_{i}\right):  i\in \left[1,n\right]\}$ is a set of $n$ consecutive integers.
Hence, if $S=\{f\left(y_{i}\right):  i\in \left[1,n\right]\}$ is a set of $n$ consecutive integers, then
$S$ is $\left[ 1, n\right] \text{or} \left[ 2, n+1\right]$ \text{or} $\dots \text{ or } \left[ l+2, n+l+1\right]$.
This gives us $\left(l+2\right)$ possibilities for the labels of $\{y_{i}: i\in \left[ 1, n\right] \}$
up to reordering.
For each one of these possibilities, there are exactly $\left(l+1\right)$ possible labels that have not been used.
This means that these labels must be assigned to the vertices $\{x\} \cup \{z_{i}:i\in \left[1,l\right]\}$.
However, since $\deg z_{i}=0$ for each $i\in \left[1,l\right]$, we only need to concern about the label assigned to $x$.
Therefore, there are exactly $\left(l+1\right)$ possible choices for $f\left(x\right)$.
This produces exactly $\left(l+1\right)\left(l+2\right)$ non-isomorphic super edge-magic labelings of $G$.
\end{proof}

\begin{fact} By the proof of the previous result, we know that if $f$ is a super edge-magic labeling of $G$, 
then the set $\{f\left(y_{i}\right):  i\in \left[1,n\right]\}$ is a set of $n$ consecutive integers. 
\end{fact}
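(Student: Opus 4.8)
The plan is to read this off directly from Lemma~\ref{trivial} together with the structure of $G=K_{1,n}\cup lK_{1}$, essentially formalizing the observation already made inside the proof of the preceding theorem. Retain the notation set up there: $V\left(G\right)=\{x\}\cup\{y_{i}:i\in\left[1,n\right]\}\cup\{z_{i}:i\in\left[1,l\right]\}$ and $E\left(G\right)=\{xy_{i}:i\in\left[1,n\right]\}$, and let $f$ be a super edge-magic labeling of $G$.

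First I would invoke Lemma~\ref{trivial}: the restriction of $f$ to $V\left(G\right)$ is a bijection onto $\left[1,\left\vert V\left(G\right)\right\vert\right]$ (so in particular the $y_{i}$ receive pairwise distinct labels), and the set $S=\left\{f\left(u\right)+f\left(v\right):uv\in E\left(G\right)\right\}$ consists of $\left\vert E\left(G\right)\right\vert=n$ consecutive integers. Since the only edges of $G$ are the $xy_{i}$, we have $S=\left\{f\left(x\right)+f\left(y_{i}\right):i\in\left[1,n\right]\right\}$, and the $n$ values $f\left(x\right)+f\left(y_{i}\right)$ are pairwise distinct. Hence $\left\{f\left(y_{i}\right):i\in\left[1,n\right]\right\}$ is obtained from $S$ by subtracting the fixed integer $f\left(x\right)$ from every member; a translate of a set of $n$ consecutive integers is again a set of $n$ consecutive integers, which is the assertion. (Alternatively, from $\text{val}\left(f\right)=f\left(x\right)+f\left(y_{i}\right)+f\left(xy_{i}\right)$ one gets $f\left(y_{i}\right)=\text{val}\left(f\right)-f\left(x\right)-f\left(xy_{i}\right)$, so $\left\{f\left(y_{i}\right)\right\}$ is the image under $t\mapsto \text{val}\left(f\right)-f\left(x\right)-t$ of the set of edge labels $\left\{f\left(xy_{i}\right)\right\}$, which is consecutive by Lemma~\ref{trivial}.)

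There is no real obstacle here; the one point to watch is that the claim concerns the labels of the $y_{i}$ only, not all of $V\left(G\right)$, so the argument must use that $E\left(G\right)$ is exactly $\{xy_{i}:i\in\left[1,n\right]\}$ and that the isolated vertices $z_{i}$ contribute nothing to $S$. This matches precisely what was displayed in the proof of the previous theorem, where $S$ was shown to be one of the intervals $\left[1,n\right],\left[2,n+1\right],\ldots,\left[l+2,n+l+1\right]$, each of which consists of $n$ consecutive integers.
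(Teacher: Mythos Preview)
Your argument is correct and is essentially the same as what the paper does: the Fact has no separate proof in the paper, it simply points back to the preceding theorem's proof, where exactly your use of Lemma~\ref{trivial} (consecutive edge sums, hence consecutive leaf labels after subtracting $f(x)$, or equivalently consecutive edge labels mapped via $t\mapsto \mathrm{val}(f)-f(x)-t$) is the operative observation. One tiny caution: in your final paragraph your symbol $S$ denotes the edge-sum set $\{f(x)+f(y_i)\}$, whereas the intervals $[1,n],\ldots,[l+2,n+l+1]$ listed in the paper's proof refer to the leaf-label set $\{f(y_i)\}$; this is only a notational slip and does not affect the argument.
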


Let $f$ be any super edge-magic labeling of $G$, and assume, without loss of generality, that 
\begin{equation*}
f\left(y_{1}\right)< f\left(y_{2}\right)<\cdots<f\left(y_{n}\right)
\end{equation*}
and 
\begin{equation*}
f\left(z_{1}\right)< f\left(z_{2}\right)<\cdots<f\left(z_{n}\right) \text{.}
\end{equation*}
For the complementary labeling of $f$, we have 
\begin{equation*}
\overline{f}\left(y_{1}\right)> \overline{f}\left(y_{2}\right)>\cdots>\overline{f}\left(y_{n}\right)
\end{equation*}
and 
\begin{equation*}
\overline{f}\left(z_{1}\right)>\overline{f}\left(z_{2}\right)>\cdots>\overline{f}\left(z_{n}\right) \text{.}
\end{equation*}

Let $f$ be a super edge-magic labeling of $K_{1,n}\cup lK_{1}$. By Lemma \ref {trivial}, we know that
\begin{eqnarray*}
\text{val}\left(f\right)&=&f\left(x\right)+f\left(y_{1}\right)+2n+l+1\\
&=&f\left(x\right)+f\left(y_{n}\right)+n+2 \text{.}
\end{eqnarray*}
Thus, both sums $f\left(x\right)+f\left(y_{1}\right)$ and $f\left(x\right)+f\left(y_{n}\right)$
perfectly determine the valence of the labeling $f$.

From this, the following fact is clear.
\begin{fact}
\label{fact2}
Let $f_{1}$ and $f_{2}$ be two super edge-magic labelings of $G$. 
Then 
\begin{equation*}
\mathrm{val}\left(f_{1}\right)+1=\mathrm{val}\left(f_{2}\right)
\end{equation*}
if and only if 
\begin{equation*}
f_{1}\left(x\right)+f_{1}\left(y_{1}\right)+1= f_{2}\left(x\right)+f_{2}\left(y_{1}\right)
\end{equation*}
if and only if
\begin{equation*}
f_{1}\left(x\right)+f_{1}\left(y_{n}\right)+1= f_{2}\left(x\right)+f_{2}\left(y_{n}\right) \text{.}
\end{equation*}
\end{fact}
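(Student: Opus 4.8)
The plan is to read this statement off directly from the two expressions for $\mathrm{val}(f)$ displayed just before it. For every super edge-magic labeling $f$ of $G = K_{1,n} \cup lK_1$ --- with the leaves indexed so that $f(y_1) < f(y_2) < \cdots < f(y_n)$, as agreed in the paragraph above --- Lemma~\ref{trivial} expresses $\mathrm{val}(f)$ both as $f(x) + f(y_1)$ plus a constant and as $f(x) + f(y_n)$ plus a constant, where in each case the added constant depends only on $|V(G)|$ and $|E(G)|$, hence only on $n$ and $l$, and not on the labeling $f$ itself. This last point is the crux: since $f_1$ and $f_2$ label one and the same graph $G$, the identical additive constants appear in the formulas for $\mathrm{val}(f_1)$ and for $\mathrm{val}(f_2)$.

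Concretely, I would write, for $j \in \{1, 2\}$, $\mathrm{val}(f_j) = f_j(x) + f_j(y_1) + c_1$ and $\mathrm{val}(f_j) = f_j(x) + f_j(y_n) + c_n$, with $c_1$ and $c_n$ the constants just described. Substituting the first of these into each side of $\mathrm{val}(f_1) + 1 = \mathrm{val}(f_2)$ and cancelling $c_1$ gives exactly $f_1(x) + f_1(y_1) + 1 = f_2(x) + f_2(y_1)$, and every step is reversible, which establishes the first ``if and only if''. Using instead the second formula and cancelling $c_n$ turns $\mathrm{val}(f_1) + 1 = \mathrm{val}(f_2)$ into $f_1(x) + f_1(y_n) + 1 = f_2(x) + f_2(y_n)$; so this third condition is likewise equivalent to $\mathrm{val}(f_1) + 1 = \mathrm{val}(f_2)$, and by transitivity all three displayed conditions are equivalent to one another.

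There is essentially no obstacle here: the statement is pure bookkeeping sitting on top of Lemma~\ref{trivial} specialized to a star together with isolated vertices. The single point worth a moment's attention is confirming that the additive constants $c_1$ and $c_n$ really are the same for $f_1$ as for $f_2$ --- but this is immediate, as each is assembled only from $|V(G)|$, $|E(G)|$, and the fixed offset supplied by Lemma~\ref{trivial}, all of which are determined by $G$ alone.
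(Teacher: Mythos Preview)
Your proposal is correct and follows exactly the approach the paper intends: the paper presents Fact~\ref{fact2} immediately after the two displayed expressions $\mathrm{val}(f)=f(x)+f(y_{1})+(2n+l+1)$ and $\mathrm{val}(f)=f(x)+f(y_{n})+\text{const}$ and simply declares the fact ``clear'' from these, without writing out a proof. Your write-up just makes explicit the cancellation of the labeling-independent constants $c_{1}$ and $c_{n}$, which is precisely what the paper is asking the reader to see.
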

Let $f$ be any super edge-magic labeling of $G$. 
Then $\gamma\left(G\right)=f\left(x\right)+f\left(y_{1}\right)$ and $\Gamma\left(G\right)=f\left(x\right)+f\left(y_{n}\right)$.

The fact that the set $\{f\left(y_{i}\right):  i\in \left[1,n\right]\}$ is a set of $n$ consecutive integers for any super edge-magic labeling of $G$ suggests the following definitions. 
Consider a super edge-magic labeling $f$ of $G$ and define the set $S_{2}^{f}=\{f\left(y_{i}\right):  i\in \left[1,n\right]\}$.
Then 
\begin{equation*}
S_{1}^{f}=\left[1,f\left(y_{1}\right) \right] \text{ and }
S_{3}^{f}=\left[1,n+l+1\right] \backslash\left(S_{1}^{f} \cup S_{1}^{f}\right) \text{.}
\end{equation*}
A super edge-magic labeling $f$ of $G$ is of type 1 and we write it as $f\in T_{1}$ if $f\left(x\right)\in S_{1}^{f}$, and it is of type 2 and we write it as $f\in T_{2}$ if $f\left(x\right)\in S_{3}^{f}$.
From Fact 1, it is easy to deduce the following fact.

\begin{fact}
The set $T_{1}\cup T_{2}$ is a partition of the set 
\begin{equation*}
F \left(G\right)= \left\lbrace \right. f\mathrm{\ :\ } f \text{ is a super edge-magic labeling of } G\left. \right\rbrace \text{.}
\end{equation*}
\end{fact}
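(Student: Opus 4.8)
The plan is to verify the two defining properties of a partition separately: that $T_{1}$ and $T_{2}$ are disjoint, and that their union is all of $F\left(G\right)$. Both follow at once from Fact 1 by pinning down where the label $f\left(x\right)$ of the center of the star sits relative to the block of labels $S_{2}^{f}=\{f\left(y_{i}\right):i\in\left[1,n\right]\}$. The idea is that $S_{2}^{f}$ is an interval of integers, so any label not used on the leaves is forced to lie strictly below it or strictly above it, and $f\left(x\right)$ is such a label.

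First I would invoke Fact 1 to write $S_{2}^{f}$ as a block of $n$ consecutive integers, say $S_{2}^{f}=\left[f\left(y_{1}\right),f\left(y_{n}\right)\right]$. Consequently, every integer in $\left[1,n+l+1\right]$ not lying in $S_{2}^{f}$ is either smaller than $f\left(y_{1}\right)$, hence an element of $S_{1}^{f}$, or larger than $f\left(y_{n}\right)$, hence an element of $S_{3}^{f}$, and these two cases cannot occur at the same time. Next, since $f$ is a bijection from $V\left(G\right)$ onto $\left[1,n+l+1\right]$ and $x\neq y_{i}$ for every $i\in\left[1,n\right]$, we have $f\left(x\right)\notin S_{2}^{f}$. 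Applying the dichotomy above to $f\left(x\right)$ gives that either $f\left(x\right)\in S_{1}^{f}$, i.e.\ $f\in T_{1}$, or $f\left(x\right)\in S_{3}^{f}$, i.e.\ $f\in T_{2}$, but never both. Because $f$ was an arbitrary super edge-magic labeling of $G$, this shows simultaneously that $T_{1}\cup T_{2}=F\left(G\right)$ and that $T_{1}\cap T_{2}=\emptyset$.

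I do not expect a genuine obstacle: the statement is in essence a reformulation of Fact 1. The only point deserving a sentence is the (pedantic) requirement that the blocks of a partition be nonempty. This is handled by noting that $S_{1}^{f}\cup S_{3}^{f}$ has exactly $\left(n+l+1\right)-n=l+1\geq 1$ elements, so the dichotomy is not vacuous, and that the complementary labeling $\overline{f}$ carries a labeling of type $1$ to one of type $2$ and conversely — immediate from the reversal $\overline{f}\left(y_{1}\right)>\cdots>\overline{f}\left(y_{n}\right)$ recorded earlier, since it turns $f\left(x\right)<f\left(y_{1}\right)$ into $\overline{f}\left(x\right)>\overline{f}\left(y_{n}\right)$. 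As $G$ itself admits a super edge-magic labeling by Lemma \ref{trivial}, both $T_{1}$ and $T_{2}$ are nonempty, so $\left\{T_{1},T_{2}\right\}$ is indeed a partition of $F\left(G\right)$.
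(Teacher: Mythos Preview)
Your proposal is correct and follows exactly the line the paper intends: the paper does not give an explicit proof but simply records that the fact ``is easy to deduce'' from Fact~1, and your argument is precisely that deduction spelled out --- $S_{2}^{f}$ is an interval, $f\left(x\right)$ misses it by injectivity, so $f\left(x\right)$ falls into exactly one of $S_{1}^{f}$, $S_{3}^{f}$. The only minor slip is the appeal to Lemma~\ref{trivial} for the existence of a super edge-magic labeling of $G$; that lemma is a characterization, not an existence statement, so for the nonemptiness of $T_{1}$ and $T_{2}$ you should instead point to the preceding theorem, which exhibits $\left(l+1\right)\left(l+2\right)$ such labelings explicitly.
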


We are now ready to state and prove the following theorem.

\begin{theorem}
There exists a bijective function between $T_{1}$ and $T_{2}$.
\end{theorem}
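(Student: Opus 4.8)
The plan is to show that the complementary-labeling operation $f\mapsto\overline{f}$ restricts to the required bijection. Recall from the discussion above that if $f$ is a super edge-magic labeling of $G$ then so is $\overline{f}$, that on $V(G)$ one has $\overline{f}(v)=(\lvert V(G)\rvert+1)-f(v)=(n+l+2)-f(v)$, and that $\overline{\overline{f}}=f$; hence $f\mapsto\overline{f}$ is an involution of $F(G)$, and in particular a bijection of $F(G)$ onto itself. Consequently it suffices to prove that this map carries $T_{1}$ into $T_{2}$, for then, being an involution, it also carries $T_{2}$ into $T_{1}$, and its restriction to $T_{1}$ is a bijection onto $T_{2}$ whose inverse is its restriction to $T_{2}$.

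First I would pin down the two types concretely. As noted above, for any super edge-magic labeling $f$ of $G=K_{1,n}\cup lK_{1}$ the leaf labels $S_{2}^{f}=\{f(y_{i}):i\in[1,n]\}$ form a block of $n$ consecutive integers, say $S_{2}^{f}=[a,a+n-1]$ with $a=f(y_{1})$; then $S_{1}^{f}=[1,a]$, $S_{3}^{f}=[a+n,n+l+1]$, and since $f(x)$ differs from every leaf label we have $f(x)\in[1,a-1]\cup[a+n,n+l+1]$. Thus $f\in T_{1}$ precisely when $f(x)<f(y_{1})$, i.e.\ the centre receives a label smaller than every leaf label, and $f\in T_{2}$ precisely when $f(x)>f(y_{n})$, i.e.\ the centre receives a label larger than every leaf label. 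These are the two descriptions I will track through complementation.

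Next I would verify that $\overline{f}\in T_{2}$ whenever $f\in T_{1}$. On $V(G)$ the complementary labeling is the order-reversing bijection $v\mapsto(n+l+2)-v$, so $\overline{f}$ again assigns the leaves a block of $n$ consecutive integers (the image of $[a,a+n-1]$), while the chain $f(x)<f(y_{1})<\cdots<f(y_{n})$ becomes $\overline{f}(x)>\overline{f}(y_{1})>\cdots>\overline{f}(y_{n})$. Equivalently, after re-indexing the leaves in increasing order of $\overline{f}$, the label $\overline{f}(x)$ lies strictly above the leaf block $S_{2}^{\overline{f}}$, hence $\overline{f}(x)\in S_{3}^{\overline{f}}$ and $\overline{f}\in T_{2}$. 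Running the identical computation with the roles of $T_{1}$ and $T_{2}$ interchanged gives $\overline{f}\in T_{1}$ whenever $f\in T_{2}$, and combining this with $\overline{\overline{f}}=f$ yields the claimed bijection.

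I do not anticipate a genuine obstacle here; the only point requiring care is bookkeeping, namely re-sorting the leaves after complementation so that $S_{1}^{\overline{f}},S_{2}^{\overline{f}},S_{3}^{\overline{f}}$ are computed with respect to $\overline{f}$, together with the observation that the inequality between $f(x)$ and the leaf labels is strict (since $f(x)$ is never a leaf label), so that $\overline{f}(x)$ cannot accidentally land inside the complemented leaf block.
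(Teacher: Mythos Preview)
Your proposal is correct and follows essentially the same route as the paper: both define the bijection via the complementary-labeling map $\phi(f)=\overline{f}$ and verify that it sends $T_{1}$ to $T_{2}$ by tracking the inequality $f(x)<f(y_{1})<\cdots<f(y_{n})$ through the order-reversing map $v\mapsto(n+l+2)-v$. Your argument is in fact slightly cleaner, since invoking the involution property $\overline{\overline{f}}=f$ dispatches injectivity and surjectivity at once, whereas the paper checks them separately.
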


\begin{proof}
Consider the function $\phi:T_{1} \rightarrow T_{2}$ defined by $\phi\left(f\right)= \overline{f}$ for all $f\in T_{1}$.
First, we show that if $f\in T_{1}$, then $\overline{f}\in T_{2}$.
If $f\in T_{1}$, then
\begin{equation*}
f\left(x\right)<f\left(y_{1}\right)< f\left(y_{2}\right)<\cdots<f\left(y_{n}\right) \text{,}
\end{equation*}
implying that 
\begin{equation*}
  \begin{split}
    \left(n+l+2\right)-f\left(x\right) & > 
    \left(n+l+2\right)-f\left(y_{1}\right)  \\
& > \left(n+l+2\right)-f\left(y_{2}\right)> \cdots>\left(n+l+2\right)-f\left(y_{n}\right) \text{.}
  \end{split}
\end{equation*}
Thus, 
\begin{equation*}
\overline{f}\left(x\right)>\overline{f}\left(y_{1}\right)> \overline{f}\left(y_{2}\right)>\cdots>\overline{f}\left(y_{n}\right)
\end{equation*}
so that $\overline{f}\in T_{2}$.

Next, assume that $\left\vert \{f_{a},f_{b}\}\cap T_{1}\right\vert=2$, and we will show that $\phi\left(f_{a}\right) \neq\phi\left(f_{b}\right)$. 
Assume that $f_{a} \neq f_{b}$. 
Then we have two possibilities. 

\noindent \textbf{Case 1.} If $f_{a} \neq f_{b}$, 
then  $\left(n+l+2\right)-\phi\left(f_{a}\right)\neq \left(n+l+2\right)-\phi\left(f_{b}\right)$.
Thus, $ \overline{f_{a}}\neq \overline{f_{b}}$ so that $\phi\left(f_{a}\right) \neq \phi\left(f_{b}\right)$.

\noindent \textbf{Case 2.} If $\{f_{a}\left(y_{i}\right):  i\in \left[1,n\right]\} \neq \{f_{b}\left(y_{i}\right):  i\in \left[1,n\right]\}$, then, without loss of generality, assume that
\begin{equation*}
f_{a}\left(y_{1}\right)< f_{b}\left(y_{1}\right) \text{ and } f_{a}\left(y_{n}\right)< f_{b}\left(y_{n}\right) \text{.}
\end{equation*}
Then 
\begin{equation*}
\left(n+l+2\right)-f_{a}\left(y_{1}\right)> 
\left(n+l+2\right)-f_{b}\left(y_{1}\right) 
\end{equation*}
and 
\begin{equation*}
\left(n+l+2\right)-f_{a}\left(y_{n}\right)>\left(n+l+2\right)- f_{b}\left(y_{n}\right) \text{.}
\end{equation*}
This implies that
$\overline{f_{a}}\left(y_{1}\right) >\overline{f_{b}}\left(y_{1}\right)$, 
which clearly implies that
$\overline{f_{a}}\left(y_{n}\right)> \overline{f_{b}}\left(y_{n}\right)$.
Since  
\begin{equation*}
\max\{\overline{f_{a}}\left(y_{i}\right):  i\in 
\left[1,n\right]\}>\max\{\overline{f_{b}}\left(y_{i}\right):  i\in \left[1,n\right]\}
\end{equation*}
and 
\begin{equation*}
\min\{\overline{f_{a}}\left(y_{i}\right):  i\in 
\left[1,n\right]\}>\min\{\overline{f_{b}}\left(y_{i}\right):  i\in \left[1,n\right]\} \text{,}
\end{equation*}
it follows that $\phi\left(f_{a}\right) \neq \phi\left(f_{b}\right)$.
Therefore, $\phi$ is injective. 
It now remains to see that $\phi$ is surjective. Let $\overline{f}\in T_{2}$ and, without loss of generality, assume that $\overline{f}$ has the property that 
\begin{equation*}
\overline{f}\left(x \right)>\overline{f}\left(y_{1}\right)>\cdots>\overline{f}\left(y_{n}\right) \text{,}
\end{equation*}
that is,
\begin{equation*}
\left(n+l+2\right)-\overline{f}\left(x\right)< \left(n+l+2\right)-\overline{f}\left(y_{1}\right)< \cdots<\left(n+l+2\right)-\overline{f}\left(y_{n}\right) \text{.}
\end{equation*}
This implies that 
\begin{equation*}
\overline{\overline{f}}\left(x\right)< \overline{\overline{f}}\left(y_{1}\right)< \cdots < \overline{\overline{f}}\left(y_{n}\right) \text{.}
\end{equation*}
Hence, $\overline{\overline{f}}\in T_{1}$. 
Clearly, $\phi\left(\overline{\overline{f}}\right)=\overline{f}$, and $\phi$ is surjective. 
Therefore, $\phi$ is bijective.
\end{proof}

\begin{proposition}
Let $f_{1}$ and $f_{2}$ be two super edge-magic labelings of $K_{1,n}\cup lK_{1}$ such that the valences of $f_{1}$ and $f_{2}$ are consecutive. 
Then the valences of  $\overline{f_{1}}$ and $\overline{f_{2}}$ are consecutive.
\end{proposition}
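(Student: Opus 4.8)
The plan is to reduce everything to a single observation: for $G = K_{1,n}\cup lK_{1}$, the valence of a super edge-magic labeling and the valence of its complementary labeling add up to a constant that depends only on $G$, namely
\[
\text{val}\left(f\right) + \text{val}\left(\overline{f}\right) = 5n + 3l + 7 .
\]
Once this identity is in hand, the proposition follows at once: if $\text{val}\left(f_{1}\right)$ and $\text{val}\left(f_{2}\right)$ are consecutive, say $\text{val}\left(f_{2}\right) = \text{val}\left(f_{1}\right)+1$, then $\text{val}\left(\overline{f_{2}}\right) = \left(5n+3l+7\right) - \text{val}\left(f_{2}\right) = \text{val}\left(\overline{f_{1}}\right) - 1$, so $\text{val}\left(\overline{f_{1}}\right)$ and $\text{val}\left(\overline{f_{2}}\right)$ are again consecutive.

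To prove the identity I would keep the normalization $f\left(y_{1}\right)<f\left(y_{2}\right)<\cdots<f\left(y_{n}\right)$ and start from the formula $\text{val}\left(f\right) = f\left(x\right)+f\left(y_{n}\right)+n+2$ recorded (via Lemma \ref{trivial}) just before Fact \ref{fact2}. Applying Lemma \ref{trivial} to $\overline{f}$, and noting that complementation reverses the order of the leaf labels so that the smallest induced edge-sum of $\overline{f}$ is attained at $y_{n}$, one gets
\[
\text{val}\left(\overline{f}\right) = \overline{f}\left(x\right)+\overline{f}\left(y_{n}\right)+2n+l+1 = \bigl[\left(n+l+2\right)-f\left(x\right)\bigr] + \bigl[\left(n+l+2\right)-f\left(y_{n}\right)\bigr] + 2n+l+1 .
\]
Substituting $f\left(x\right)+f\left(y_{n}\right) = \text{val}\left(f\right)-n-2$ and simplifying yields $\text{val}\left(\overline{f}\right) = \left(5n+3l+7\right)-\text{val}\left(f\right)$, as claimed.

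There is no real obstacle in this argument; the only delicate point is the bookkeeping, namely that under $\overline{f}$ the inequalities $f\left(y_{1}\right)<\cdots<f\left(y_{n}\right)$ are reversed, so the minimum of $\left\{\overline{f}\left(x\right)+\overline{f}\left(y_{i}\right):i\in\left[1,n\right]\right\}$ occurs at $i=n$, and one must feed the correct endpoint into Lemma \ref{trivial}. If one prefers to avoid exhibiting the constant explicitly, the same conclusion can be reached by invoking Fact \ref{fact2} for the pair $f_{1},f_{2}$ and again, after reindexing the leaves of $\overline{f_{1}}$ and $\overline{f_{2}}$ so as to be increasing, for the pair $\overline{f_{1}},\overline{f_{2}}$, together with the relation $\overline{f}\left(x\right)+\overline{f}\left(y_{n}\right) = 2\left(n+l+2\right)-\left(f\left(x\right)+f\left(y_{n}\right)\right)$.
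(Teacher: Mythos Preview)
Your argument is correct and is essentially the paper's own: both rest on the identity $\text{val}(f)+\text{val}(\overline{f})=2\omega+\rho$ (with $\omega=n+l+2$, $\rho=3n+2l+3$), the paper reading it off directly from the definition of $\overline{f}$ on a generic edge, while you route through Lemma~\ref{trivial} and the leaf ordering. One arithmetic slip to fix: the constant is $2\omega+\rho=5n+4l+7$, not $5n+3l+7$; the discrepancy traces to the formula $\text{val}(f)=f(x)+f(y_n)+n+2$ you quote, which should read $n+l+2$ (the smallest edge label is $\left\vert V(G)\right\vert+1=n+l+2$). This does not affect the logic, since only the constancy of the sum is used.
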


\begin{proof}
Assume that $\text{val}\left(f_{1}\right)$ and $\text{val}\left(f_{2}\right)$ are consecutive, and let $\text{val}\left(f_{1}\right)+1= \text{val}\left(f_{2}\right)$. 
Then we have
$f_{1}\left(a\right)+f_{1}\left(b\right)+f_{1}\left(ab\right)+1=f_{2}\left(a\right)+f_{2}\left(b\right)+f_{2}\left(ab\right)$,
where $ab\in E\left(K_{1,n}\cup lK_{1}\right)$.
If we let $\omega=n+l+2$ and $\rho=3n+2l+3$, then we have 
\begin{equation*}
-2\omega-\rho+f_{1}\left(a\right)+f_{1}\left(b\right)+f_{1}\left(ab\right)+1
=-2\omega-\rho +f_{2}\left(a\right)+f_{2}\left(b\right)+f_{2}\left(ab\right) \text{,}
\end{equation*}
implying that 
\begin{eqnarray*}
&&\left(\omega-f_{1}\left(a\right)\right)+\left(\omega-f_{1}\left(b\right)\right)+\left(\rho-f_{1}\left(ab\right)\right)-1 \\
&=&\left(\omega-f_{2}\left(a\right)\right)+\left(\omega-f_{2}\left(b\right)\right)+\left(\rho-f_{2}\left(ab\right)\right) \text{.}
\end{eqnarray*}
Hence, 
$\overline{f_{1}}\left(a\right)+ \overline{f_{1}}\left(b\right)+ \overline{f_{1}}\left(ab\right)-1
=\overline{f_{2}}\left(a\right)+ \overline{f_{2}}\left(b\right)+ \overline{f_{2}}\left(ab\right)$.
Thus, $\text{val}\left(\overline{f_{1}}\right)-1= \text{val}\left(\overline{f_{2}}\right)$.
Consequently, the valences of $\overline{f_{1}}$ and $\overline{f_{2}}$ are consecutive.
\end{proof}

Next, consider the graph $K_{1,n}\cup lK_{1}$ together with its associated set $T_{1}$.
Define the set $S\left(T_{1}\right)$ as
$S\left(T_{1}\right)= \left\lbrace \right.x\in \mathbb{N}: x= \text{val}\left(f\right) \text{ and } f\in T_{1} \left. \right\rbrace$. Also, consider the set $T_{2}$ associated to  $K_{1,n}\cup lK_{1}$ and, similarly, define the set  $S\left(T_{2}\right)$ as
$S\left(T_{2}\right)= \left\lbrace \right.x\in \mathbb{N}: x= \text{val}\left(f\right) \text{ and } f\in T_{2} \left. \right\rbrace$. 
With these definitions in hand, we have the following result.

\begin{theorem}
The set $S\left(T_{1}\right)$ consists of consecutive integers.
\end{theorem}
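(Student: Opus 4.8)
The plan is to reduce the statement to an elementary computation with unions of intervals of integers.

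First I would recall the formula established in the discussion preceding the theorem: every super edge-magic labeling $f$ of $G=K_{1,n}\cup lK_{1}$ satisfies $\mathrm{val}\left(f\right)=f\left(x\right)+f\left(y_{1}\right)+2n+l+1$, where $y_{1}$ denotes the leaf receiving the smallest label among $y_{1},\dots,y_{n}$. Since $2n+l+1$ depends only on $n$ and $l$, it is enough to show that $\left\{f\left(x\right)+f\left(y_{1}\right):f\in T_{1}\right\}$ is a set of consecutive integers; translating back by the constant $2n+l+1$ then yields the claim for $S\left(T_{1}\right)$, and in fact identifies it as a concrete interval.

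Next I would parametrize $T_{1}$ explicitly. By the fact recalled earlier, $\left\{f\left(y_{i}\right):i\in\left[1,n\right]\right\}$ is a block of $n$ consecutive integers, say $\left[a,a+n-1\right]$ with $a=f\left(y_{1}\right)$; for this block to lie inside $\left[1,n+l+1\right]$ one needs $a\le l+2$, and since $f\in T_{1}$ forces $f\left(x\right)<f\left(y_{1}\right)=a$ one also needs $a\ge 2$. Conversely, for every $a\in\left[2,l+2\right]$ and every $c\in\left[1,a-1\right]$ one obtains a member of $T_{1}$: assign the leaves the labels $\left[a,a+n-1\right]$, assign $x$ the label $c$, and distribute the remaining $l$ labels among $z_{1},\dots,z_{l}$ arbitrarily; the set of induced edge sums is then $\left\{c+a,c+a+1,\dots,c+a+n-1\right\}$, a set of $n$ consecutive integers, so by Lemma~\ref{trivial} this bijection extends to a super edge-magic labeling, and it lies in $T_{1}$ because $f\left(x\right)=c<a$. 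Hence the set of realizable values of $f\left(x\right)+f\left(y_{1}\right)$ over $f\in T_{1}$ equals $\bigcup_{a=2}^{l+2}\left\{c+a:c\in\left[1,a-1\right]\right\}=\bigcup_{a=2}^{l+2}\left[a+1,2a-1\right]$.

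The final step, and the only one requiring any argument, is to check that this union has no gaps. The summand $a=2$ contributes the single point $3$, and for each $a\ge 2$ the next summand $\left[a+2,2a+1\right]$ begins at $a+2\le 2a=\left(2a-1\right)+1$, so consecutive intervals in the union abut or overlap; since the summand $a=l+2$ ends at $2l+3$, the union is exactly $\left[3,2l+3\right]$. Therefore $S\left(T_{1}\right)=\left[2n+l+4,\,2n+3l+4\right]$, which is a set of consecutive integers. The main obstacle is really just this interval bookkeeping together with being careful about the degenerate endpoint $a=2$ and small values of $l$; neither causes any difficulty.
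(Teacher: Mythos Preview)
Your proposal is correct and follows essentially the same approach as the paper: both argue by showing that the set of low characteristics $f(x)+f(y_{1})$ realized by labelings in $T_{1}$ is exactly $[3,2l+3]$, via the parametrization by the position of the block of leaf-labels and the choice of $f(x)$ below it. Your write-up is a bit more explicit than the paper's case list in that you verify each $(a,c)$ actually extends to a super edge-magic labeling via Lemma~\ref{trivial} and check the overlap $\left[a+2,2a+1\right]\cap\left[a+1,2a-1\right]\ne\emptyset$ between successive summands, but the idea is identical.
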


\begin{proof}
Let $f\in T_{1}$, and consider $\left(l+1\right)$ cases, depending on the possibilities for $S_{1}^{f}$. 

\noindent \textbf{Case 1.} Let $S_{1}^{f}=\{1\}$. 
In this case, $\min \left(S_{2}^{f}\right)=2$ and hence $3$ is the low characteristic of $f$.

\noindent \textbf{Case 2.} Let $S_{1}^{f}=\{1,2\}$. 
In this case, $\min \left(S_{2}^{f}\right)=3$ and hence the low characteristic of $f$ is $4$ or $5$.

\noindent \textbf{Case 3.} Let $S_{1}^{f}=\{1,2,3\}$. 
In this case, $\min \left(S_{2}^{f}\right)=4$ and hence the low characteristic of $f$ is $5$, $6$ or $7$.

\noindent \textbf{Case 4.} Let $S_{1}^{f}=\{1,2,3,4\}$. 
In this case, $\min \left(S_{2}^{f}\right)=5$ and hence the low characteristic of $f$ is $6$, $7$, $8$ or $9$.
\begin{equation*}
\vdots
\end{equation*}
\noindent \textbf{Case $\left(l+1\right)$.} Let $S_{1}^{f}=\{1,2,\dots, l+1\}$. In this case, $\min \left(S_{2}^{f}\right)=l+2$ and hence the low characteristic of $f$ is $l+3$, $l+4$, \dots, or $2l+3$.

\noindent This means that the labelings of $T_{1}$ have low characteristics $3$, $4$,\dots, $2l+3$. 
Therefore, we conclude by Fact \ref{fact2} that the set $S\left(T_{1}\right)$ is a set of consecutive integers.
\end{proof}

The following result is an immediate consequence of the preceding theorem and proposition.

\begin{corollary}
The set $S\left(T_{2}\right)$ consists of consecutive integers.
\end{corollary}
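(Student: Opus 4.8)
The plan is to leverage the bijection $\phi:T_{1}\rightarrow T_{2}$ given by $\phi(f)=\overline{f}$ from the theorem just proved, together with the way valences transform under complementation. Since $\phi$ is a bijection, every member of $T_{2}$ is $\overline{f}$ for a unique $f\in T_{1}$, so $S\left(T_{2}\right)=\left\{\mathrm{val}\left(\overline{f}\right):f\in T_{1}\right\}$. Thus it suffices to understand how $\mathrm{val}\left(\overline{f}\right)$ relates to $\mathrm{val}(f)$, and then to invoke the preceding theorem, which says $S\left(T_{1}\right)$ is a set of consecutive integers.

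The first concrete step is to record that $\mathrm{val}(f)+\mathrm{val}\left(\overline{f}\right)$ is constant over all super edge-magic labelings $f$ of $K_{1,n}\cup lK_{1}$. Writing $\omega=n+l+2$ and $\rho=3n+2l+3$ as in the proof of the preceding proposition, we have $\overline{f}(x)=\omega-f(x)$ for $x\in V(G)$ and $\overline{f}(e)=\rho-f(e)$ for $e\in E(G)$, so for any edge $ab$,
\begin{equation*}
\mathrm{val}\left(\overline{f}\right)=\left(\omega-f(a)\right)+\left(\omega-f(b)\right)+\left(\rho-f(ab)\right)=2\omega+\rho-\mathrm{val}(f)=(5n+4l+7)-\mathrm{val}(f)\text{.}
\end{equation*}
Substituting this into the description of $S\left(T_{2}\right)$ above yields $S\left(T_{2}\right)=(5n+4l+7)-S\left(T_{1}\right)$, i.e. $S\left(T_{2}\right)$ is the reflection of $S\left(T_{1}\right)$ in the point $(5n+4l+7)/2$. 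Since the reflection of a set of consecutive integers is again a set of consecutive integers, the preceding theorem finishes the proof.

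A second route, staying closer to the phrase "an immediate consequence of the preceding theorem and proposition", would be to write $S\left(T_{1}\right)=\left[a,b\right]$, fix for each $v\in\left[a,b\right]$ a labeling $f_{v}\in T_{1}$ of valence $v$, and apply the preceding proposition to each consecutive pair $f_{v},f_{v+1}$; its proof shows $\mathrm{val}\left(\overline{f_{v+1}}\right)=\mathrm{val}\left(\overline{f_{v}}\right)-1$, so $\left\{\mathrm{val}\left(\overline{f_{v}}\right):v\in\left[a,b\right]\right\}$ is a block of $b-a+1$ consecutive integers contained in $S\left(T_{2}\right)$. The main obstacle on this route is to rule out that some other $f\in T_{1}$ produces a complement valence outside this block — equivalently, to show $\mathrm{val}\left(\overline{f}\right)$ depends only on $\mathrm{val}(f)$ — which is exactly the constant-sum identity above (or can be extracted by chaining the proposition through a neighbouring value of the valence). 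Either way the conclusion follows; and if $\left|S\left(T_{1}\right)\right|=1$ the statement is trivially true.
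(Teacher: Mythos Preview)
Your proposal is correct and follows essentially the same route the paper has in mind: the paper gives no detailed proof, merely citing the bijection theorem, the preceding proposition, and the theorem that $S(T_{1})$ is a run of consecutive integers. Your first argument makes explicit the constant-sum identity $\mathrm{val}(f)+\mathrm{val}(\overline{f})=2\omega+\rho=5n+4l+7$, which is exactly what the proof of the proposition establishes in disguise (there one reads off $\mathrm{val}(\overline{f_{1}})-1=\mathrm{val}(\overline{f_{2}})$ from $\mathrm{val}(f_{1})+1=\mathrm{val}(f_{2})$); combined with the bijection $\phi:T_{1}\to T_{2}$ this gives $S(T_{2})=(5n+4l+7)-S(T_{1})$, and you are done. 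Your second route is precisely the ``literal'' reading of the paper's one-line justification, and you are right to flag that the proposition as \emph{stated} only transfers consecutiveness pairwise and does not by itself force $\mathrm{val}(\overline{f})$ to depend only on $\mathrm{val}(f)$; extracting the constant-sum identity from the proposition's \emph{proof} (as you do) is the clean way to close that gap.
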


Next, we will concentrate on the elements of $S\left(T_{2}\right)$. 
We know that $S\left(T_{2}\right)$ contains the valences of the complementary labelings of the super edge-magic labelings of $T_{1}$. 
We have seen above that the low characteristics are $3,4,\dots,2l+3$. 
Thus, the high characteristics of the elements of $T_{2}$ are $\left(2n+2l+4\right)-3, \left(2n+2l+4\right)-4,\dots, \left(2n+2l+4\right)-\left(2l+3\right)$. 
From these, we deduce that the minimum among all the low characteristics of these labelings is 
\begin{equation*}
\left(2n+2l+4\right)-\left(2l+3\right)=n+2 \text{.}
\end{equation*}
Therefore, the only requirement needed for $K_{1,n}\cup lK_{1}$ to be perfect super edge-magic is $2l+3\geq n+2$.

\begin {theorem}
For every positive integer $n$, 
\begin{equation*}
\mu_{p}\left(K_{1,n}\right)<+ \infty \text{.}
\end{equation*}
\end {theorem}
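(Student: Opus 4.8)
The plan is to establish the stronger statement that $K_{1,n}\cup K_{1}$ is perfect edge-magic; since $\mu_{p}(K_{1,n})$ is the least $m$ for which $K_{1,n}\cup mK_{1}$ is perfect edge-magic, this gives $\mu_{p}(K_{1,n})\leq 1<+\infty$ at once. (One can in fact check separately that $K_{1,n}$ by itself realizes only the three valences $2n+4$, $3n+3$, $4n+2$ for edge-magic labelings while $\lambda_{K_{1,n}}$ has $2n-1$ elements, so the bound $\mu_{p}(K_{1,n})=1$ is sharp for $n\geq 3$; but only the upper bound is needed here.)

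First I would fix notation for $G=K_{1,n}\cup K_{1}$: let $x$ be the centre of the star, $y_{1},\dots ,y_{n}$ its leaves, $z$ the isolated vertex, so that $E(G)=\{xy_{i}:i\in[1,n]\}$, $|V(G)|=n+2$, $|E(G)|=n$, and an edge-magic labeling of $G$ uses the label set $[1,2n+2]$. Next I would compute the edge-magic interval $\lambda_{G}$: for a bijection $g:V(G)\cup E(G)\rightarrow[1,2n+2]$ the numerator $\sum_{u}\deg(u)g(u)+\sum_{e}g(e)$ equals $n\,g(x)+\sum_{i}g(y_{i})+\sum_{i}g(xy_{i})$, and since the $2n$ labels carried by the $y_{i}$ and the $xy_{i}$, together with $g(x)$ and $g(z)$, exhaust $[1,2n+2]$, this numerator equals $(n-1)g(x)-g(z)+(n+1)(2n+3)$. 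Extremizing over the two free labels $g(x)$ and $g(z)$ (smallest and largest respectively for the minimum, largest and smallest for the maximum) gives $\min T_{G}=2n+4$ and $\max T_{G}=4n+5$, both integers, so $\lambda_{G}=[2n+4,4n+5]$.

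Third, and this is the crux, I would realize every $k\in[2n+4,4n+5]$ as the valence of an edge-magic labeling of $G$. The key observation is that the $n+1$ two-element sets $\{j,\,2n+3-j\}$ with $j\in[1,n+1]$ are pairwise disjoint and partition $[1,2n+2]$. Given such a $k$, set $c:=k-(2n+3)\in[1,2n+2]$ and let $\{c,\,2n+3-c\}$ be the unique one of these sets containing $c$ (note $c\neq 2n+3-c$, since $2n+3$ is odd). Define $f$ by $f(x)=c$, $f(z)=2n+3-c$, and for the remaining $n$ of the sets assign the two elements of each set to $f(y_{i})$ and $f(xy_{i})$ for distinct leaves $y_{i}$. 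Then $f$ is a bijection onto $[1,2n+2]$, and every edge satisfies $f(x)+f(y_{i})+f(xy_{i})=c+(2n+3)=k$, so $f$ is an edge-magic labeling of valence $k$. As $k$ ranges over $[2n+4,4n+5]$ the parameter $c$ ranges over $[1,2n+2]$, so every element of $\lambda_{G}$ is a valence; hence $\tau_{G}=\lambda_{G}$, the graph $K_{1,n}\cup K_{1}$ is perfect edge-magic, and $\mu_{p}(K_{1,n})\leq 1$.

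The step I expect to be the main obstacle is the third one. A priori one might expect — as happens for the super edge-magic analogue discussed above, which needs on the order of $n/2$ added isolated vertices — that the number of vertices one must adjoin should grow with $n$; the real content is the observation that a \emph{single} isolated vertex already supplies the slack needed, absorbing the label $2n+3-c$ that is left unpaired once $c=f(x)$ breaks the symmetric pairing $\{j,2n+3-j\}$, and that sliding $c$ through $[1,2n+2]$ sweeps the valence through the whole interval $\lambda_{G}$. By contrast, the computation of $\lambda_{G}$ in the second step is routine.
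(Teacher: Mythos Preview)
Your argument is correct and considerably sharper than the paper's. The paper proceeds indirectly: it first leans on the earlier analysis showing that $K_{1,n}\cup lK_{1}$ is perfect \emph{super} edge-magic once $2l+3\geq n+2$, uses this to cover an interval of valences starting at the bottom of $\lambda_{G}$, then builds an auxiliary family $f_{0},f_{1},\dots,f_{l-1}$ of edge-magic (non-super) labelings covering a block of valences near the top, and finally takes $l$ large enough (in fact $l\geq n-1$) so that the two blocks overlap. This yields only $\mu_{p}(K_{1,n})\lesssim n$, and the bookkeeping in the paper's version is somewhat loose (for instance the ``minimum super edge-magic valence'' it writes down, $2n+4$, is actually the minimum edge-magic valence; the minimum super edge-magic valence of $K_{1,n}\cup lK_{1}$ is $2n+l+4$).

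Your route bypasses the super edge-magic machinery entirely: the pairing $\{j,2n+3-j\}$ of $[1,2n+2]$ lets a single isolated vertex absorb the partner of $f(x)=c$, and sliding $c$ through $[1,2n+2]$ sweeps the valence through all of $\lambda_{G}=[2n+4,4n+5]$. This gives the much stronger conclusion $\mu_{p}(K_{1,n})\leq 1$ (and, combined with your parenthetical remark that $K_{1,n}$ itself realizes only three valences, the exact value $\mu_{p}(K_{1,n})=1$ for $n\geq 3$). What the paper's approach buys is that it illustrates how the perfect super edge-magic structure developed earlier can be leveraged toward the edge-magic question; what your approach buys is a clean, self-contained proof with an optimal bound.
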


\begin{proof}
Let $G=  K_{1,n}\cup lK_{1}$, and assume that $G$ is perfect super edge-magic. 
Then it is trivial to observe that $K_{1,n}\cup l ^{\prime}K_{1}$ is perfect super edge-magic for any $ l ^{\prime} $ with $ l ^{\prime} \geq  l$. 
Since $G$ is perfect super edge-magic, it follows that the minimum valence of a super edge-magic labeling (and also the smallest possible valence for an edge-magic labeling of $G$) is computed as follows.
First, observe that 
$\left\vert V\left( G\right) \right\vert =n+l+1$ and $\left\vert E\left( G\right) \right\vert =n$. 
Next, observe that the super edge-magic labeling with smallest valence has the following properties.
\begin{enumerate} 
\item The vertex of $G$ of degree $n$ is labeled $1$.
\item The labels of the vertices other than isolated vertices and edges are all the numbers in the set
$\left[2,2n\left(2n+1\right)\right]$.
\end{enumerate}
With this knowledge in hand, we can compute the minimum possible valence $\text{val}_{\min}\left(G\right)$ of $G$ as follows:
\begin{equation*}
\text{val}_{\min}\left(G\right) = \frac{n + \sum_{i=2}^{2n+1} i }{n}=2n+4 \text{.}
\end{equation*}
Thus, the minimum possible valence of any edge-magic labeling of $G$ is $2n+4$.
On the other hand, the maximum valence $\text{val}_{\max}\left(G\right)$ of $G$ is given by 
\begin{equation*}
\text{val}_{\max}\left(G\right) = \frac{\left(n-1\right)\left(n+l+1\right) + \sum_{i=l+1}^{2n+l+1} i }{n}=3n+3l+3 \text{.}
\end{equation*}
Hence, the maximum possible labeling of $G$ is $3n+3l+3$.
Since $G$ is perfect super edge-magic by assumption, it follows that for every $\alpha \in \left[2n+4,3n+3l+3\right]$,
there exists a super edge-magic labeling of $G$ that has valence $\alpha$.
At this point, define the graph $G$ with
\begin{equation*}
V\left(G\right)=\{u_{i}: i \in \left[1,l\right]\} \cup \{v_{i}: i \in \left[n+l+1,2n+l+1\right]
\end{equation*}
and $E\left(G\right)=\{v_{2n+l+1}v_{i}: i \in \left[n+l+1,2n+l\right]\}$,
and consider the following $l$ edge-magic labelings $f_{0},f_{1},\dots,f_{l-1}$.
For each $k \in \left[0,l-1\right]$, let $f_{k}\left(v_{i}\right)=i$, where $i \in \left[1,2n+l+1\right]$. 
Also, let $f_{0}\left(v_{2n+l+1}v_{n+l+i}\right)=n+l+1-i$ for each $i \in \left[1,n\right]$,
and the labels not used go on the set of vertices $\{u_{i}: i\in \left[1,l\right]\}$.
Now, the labels on the edges assigned by each edge-magic labeling $f_{k}$ ($k\in  \left[1,l\right]$) are defined as 
$f_{k}\left(ab\right)=f_{0}\left(ab\right)-k$ for $ab\in E\left(G\right)$.
Once again, the labels assigned by $f_{k}$ to the vertices in the set $\{u_{i}: i\in \left[1,l\right]\}$ are the labels not used for the rest of vertices and edges.
These edge-magic labelings $f_{0},f_{1},\dots,f_{l-1}$ produce the valences $4n+3l+2,4n+3l+1,\dots,4n+2l+2$ (see the following example for labelings $f_{0},f_{1},f_{2}$ depicted in Figure \ref{fig02}).

\begin{figure}[ht]
  \begin{center}
    \includegraphics[keepaspectratio, width=0.8\linewidth]{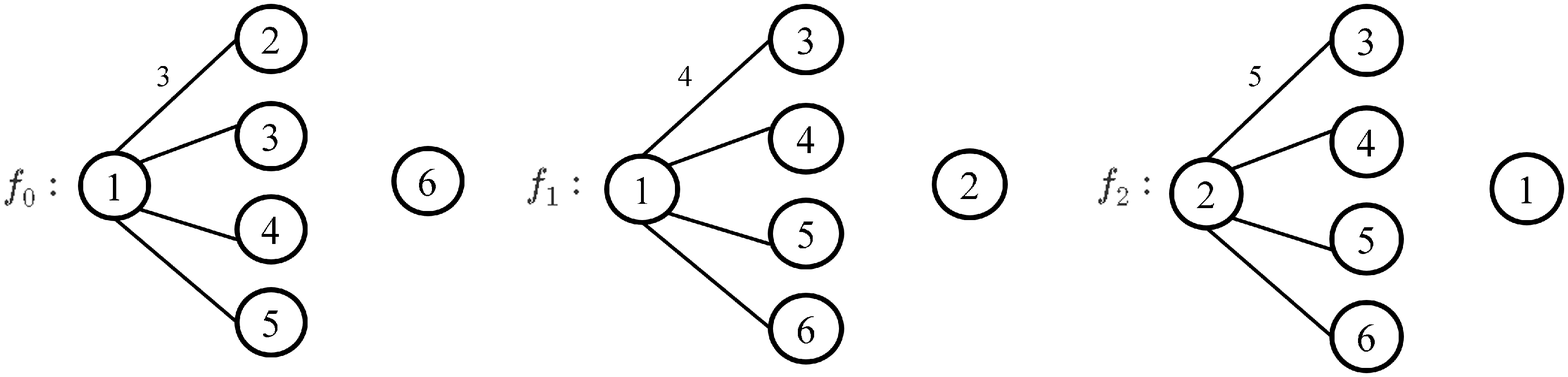}
    \caption{Example for labelings $f_{0},f_{1},f_{2}$ with minimum induced edge sums}
    \label{fig02}
  \end{center}
\end{figure}

Finally, recall that all valences from $2n+4$ to $3n+3l+3$ are attained by super edge-magic labelings. 
Also, all valences from $4n+2l+2$ to $4n+3l+2$ are attained by edge-magic labelings.
Therefore, if we take $l$ large enough so that $3n+3l+3\geq 4n+2l+2$.
We conclude that $G$ is perfect edge-magic and hence $\mu_{p}\left(K_{1,n}\right)<+ \infty$.
\end{proof}

\section{Conclusions and new research trends}

The main goal of this paper is to study the valences of the edge-magic and super edge-magic labelings of graphs. 
This study started with the paper by Godbold and Slater \cite{GS} in which they conjectured that the cycles other than $C_{5}$ are perfect edge-magic. 
This conjecture remains open, but in \cite{ McQuillan}, and independently in \cite{LMR5} some substantial progress was made. 
For further information on this problem, the interested reader may consult also \cite{LM} and \cite{Mohan}. 
In this paper, we have introduced the concepts of perfect edge-magic deficiency and perfect super edge-magic deficiency of graphs, and we have studied these concepts in relation to the star $K_{1,n}$. 
In addition, we have presented results on the cardinality of the super edge-magic set of graphs with certain order, size and girth. 

For future work, it is interesting to notice the following.
Consider the cycle $C_{3}$. 
It is clear that the super edge-magic interval for $C_{3}$ is $\left[9,9\right]$.
\begin{figure}[ht]
  \begin{center}
    \includegraphics[keepaspectratio, width=0.45\linewidth]{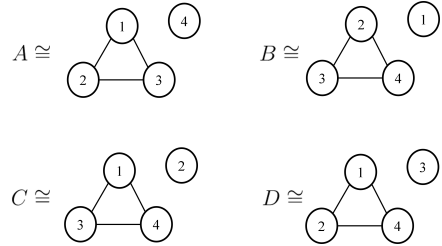}
    \caption{Four non-isomorphic bijections of the form $f:V\left(C_{3}\cup K_{1}\right) \rightarrow \left[1,4\right]$}
    \label{fig03}
  \end{center}
\end{figure}
Since $C_{3}$ is super edge-magic, it follows that $C_{3}$ is also perfect super edge-magic.
Thus, $\mu_{p}^{s}\left(G\right)=0$.

At this point, consider the graph $C_{3}\cup K_{1}$.
Since $C_{3}=K_{3}$, it follows that there exist four non-isomorphic bijections of the form 
$f:V\left(C_{3}\cup K_{1}\right) \rightarrow \left[1,4\right]$ (see Figure \ref{fig03}).
From these four bijections, it is clear that only bijections $A$ and $B$ can be extended to a super edge-magic labeling.
In case of $A$, the labeling has valence $10$ and in case of $B$ the labeling has valence $12$.
Thus, there is not any super edge-magic labeling of $C_{3}\cup K_{1}$ with valence $11$, implying that $C_{3}\cup K_{1}$ is not perfect super edge-magic.

From this, we can see that, on the contrary of when we deal with super edge-magic deficiency, that a graph $G$ has perfect super edge-magic deficiency $t$ so that $G\cup tK_{1}$ is perfect super edge-magic and $ C_{3}\cup t ^{\prime}K_{1}$ is not perfect super edge-magic if $ t^{\prime}<t$, it is not necessarily true that $G\cup t^{\prime\prime}K_{1}$ is also perfect super edge-magic for $t^{\prime\prime}>t$.
This suggests the definition of strong perfect super edge-magic deficiency.
The \emph{strong perfect super edge-magic deficiency} of a graph $G$ is the minimum nonnegative integer $t$ such that $G\cup t^{\prime\prime}K_{1}$ is perfect super edge-magic for all $t^{\prime\prime} \geq t$.
If such $t$ does not exist, then the strong super edge-magic deficiency of $G$ is defined to be $ + \infty$. 

We suspect that something similar comes about in the case of perfect edge-magic deficiency and that similar concepts can be introduced in this case; however, we do not have examples at this point to support this claim.
Other problems that we feel that would be interesting are problems of the following type.

\begin{problem}
For which real numbers $x$ $(0<x<1)$, there exists a sequence of graphs $(G_{1}^{x},G_{2}^{x}, \dots)$ such that $\text{lim}_{n \rightarrow +\infty} \frac{|\tau_{G_{n}^{x}}|}{|\lambda_{G_{n}^{x}}|} = x$?
\end{problem}
\begin{problem}
For which real numbers $x$ $(0<x<1)$, there exists a sequence of graphs $(H_{1}^{x},H_{2}^{x}, \dots)$ such that $\text{lim}_{n \rightarrow +\infty} \frac{|\sigma_{H_{n}^{x}}|}{|I_{H_{n}^{x}}|} = x$?
\end{problem}

In summary, we consider that the continuation of the ideas established in this paper constitute very interesting new trends for developing further research.

\section*{Acknowledgement}
The authors acknowledge Susana Clara L\'{o}pez Masip for her careful reading of this work and for her continuous support during the competition of this paper.

\end{document}